  \def\Sigma{\textSigma}%
  \def\sigma{\textsigma}%
\def\titlerunning#1{\gdef\titrun{#1}}
\def\author#1{\gdef\autrun{\def\and{\unskip, }#1}\gdef\@author{#1}}
\def\address#1{{\def\and{\\\hspace*{18pt}}\renewcommand{\thefootnote}{}%
\footnote {#1}}%
\markboth{\autrun}{\titrun}}
\def\email#1{e-mail: #1}
\def\subjclass#1{{\renewcommand{\thefootnote}{}%
\footnote{\emph{Mathematics Subject Classification (2010):} #1}}}
\def\keywords#1{\par\medskip
\noindent\textbf{Keywords.} #1}
\numberwithin{equation}{section}
\numberwithin{equation}{subsection}
\theoremstyle{plain}
\newtheorem{theorem}[equation]{Theorem}
\newtheorem{proposition}[equation]{Proposition}
\newtheorem{corollary}[equation]{Corollary}
\theoremstyle{definition}
\newtheorem{definition}[equation]{Definition}
\newcommand{\cO}{\mathcal{O}}
\newcommand{\cH}{\mathcal{H}}
\newcommand{\cV}{\mathcal{V}}
\newcommand{\cZ}{\mathcal{Z}}
\newcommand{\cF}{\mathcal{F}}
\newcommand{\cP}{\mathcal{P}}
\newcommand{\cS}{\mathcal{S}}
\newcommand{\cE}{\mathcal{E}}
\newcommand{\calm}{{\mathcal M}}
\newcommand{\calj}{\mathcal{V}}
\newcommand{\calQ}{\mathcal{Q}}\newcommand{\calP}{\mathcal{P}}
\newcommand{\calF}{\mathcal{F}}\newcommand{\calU}{{\mathcal U}}
\newcommand{\q}{w}
\newcommand{\bz}{{\bf z}}
\DeclareMathOperator{\rank}{{\rm rank}}
\DeclareMathOperator{\Hom}{{\rm Hom}}
\newcommand{\et}{\mathcal{T}}
\newcommand{\setQ}{\mathbb{Q}}\newcommand{\Q}{\mathbb{Q}}
\newcommand{\R}{\mathbb{R}}
\newcommand{\setZ}{\mathbb{Z}}\newcommand{\Z}{\mathbb{Z}}
\def\bH{\mathbb H}
\newcommand{\hh}{\mathfrak{h}}
\newcommand{\bt}{{\bf t}}
\newcommand{\bx}{{\bf x}}
\newcommand{\frs}{\mathfrak{s}}\newcommand{\frsw}{\mathfrak{sw}}
\providecommand{\coloneqq}{\mathrel{:=}}
\newcommand{\labelpar}{\label}
\begin{document}

\baselineskip=17pt

\titlerunning{The SW-invariant of plumbed 3--manifolds}

\title{The Seiberg--Witten invariants of negative definite plumbed 3--manifolds}

\author{Andr\'as N\'emethi}

\date{}

\maketitle

\address{R\'enyi Institute of Mathematics,  1053 Budapest,   Re\'altanoda u. 13--15,
 Hungary;
\email{nemethi@renyi.hu}}

\subjclass{Primary 32S05, 32S25, 57M27; Secondary  32S45, 32S50, 32C35, 57R57}

\begin{abstract}
Assume that $\Gamma$ is a connected negative definite plumbing graph, and that
the associated plumbed 3--manifold  $M$ is a rational homology sphere.
We provide two new combinatorial formulae for the Seiberg--Witten invariant of $M$.
The first one is the constant term of a `multivariable Hilbert polynomial',
it reflects in a  conceptual  way the structure of the
graph  $\Gamma$, and emphasizes the subtle parallelism between these topological
invariants and the analytic invariants of normal surface singularities.
The second formula realizes the Seiberg--Witten invariant as the normalized Euler characteristic
of the lattice cohomology associated with $\Gamma$,
supporting the conjectural connections between the Seiberg--Witten Floer
homology, or the  Heegaard--Floer homology, and the lattice cohomology.
\keywords{normal surface singularities, resolutions of singularities,
links of singularities, plumbed 3-manifolds, plumbing graphs,  Seiberg-Witten invariants,
surgery formulae, periodic constant, Hilbert polynomials, Seiberg-Witten Invariant Conjecture,
zeta-function, lattice cohomology, Heegaard--Floer homology.}
\end{abstract}

\section{Introduction}\label{sec:introduction}

Let $\Gamma$ be a connected negative definite plumbing graph with vertices $\cV$.
We assume that it is a tree, and all the plumbed surfaces have genus zero.
Hence, the associated oriented plumbed 3--manifold $M=M(\Gamma)$ is a rational homology sphere.
We denote by  $\frsw_{\sigma}(M)$
the Seiberg--Witten invariants of $M$
indexed by the $spin^c$--structures $\sigma$ of $M$. Although in the recent years several combinatorial
formulae were established for them, their computation is still very difficult and involved.
E.g., in \cite{Nico5} it is proved that they are equivalent with Turaev's torsion
normalized by the Casson--Walker invariant (a result based on the surgery
formulas of \cite{MW}). In terms of $\Gamma$,
 a combinatorial formula for the Casson--Walker invariant
 can be deduced from Lescop's book \cite{Lescop},
while  the Turaev's torsion  is determined in \cite{NN1}.
Nevertheless, this expression of the torsion is based on a Dedekind--Fourier sum, which,
in most of the  particular cases, is hard to determine.

For some special graphs, for the computation of the Seiberg--Witten invariant
 one can use results of the Heegaard--Floer homology too, especially surgery formulae, see e.g.
\cite{OSzP,NOSZ,Rus}. Moreover, for arbitrary graphs,
\cite{BN} provides a different type of surgery formula (motivated by singularity theory).
 In fact, in this note we  rely exactly on this surgery formula from  \cite{BN}.

Our goal is to provide two new combinatorial formulae for
$\frsw_{\sigma}(M)$.
One of them uses qualitative properties of the coefficients of a combinatorial zeta function associated with
$\Gamma$, the other is the normalized Euler characteristic of the lattice cohomology
of  $\Gamma$ (introduced in \cite{NLC}).
Both formulae reflect in the most conceptual and optimal way the structure of the
graph  $\Gamma$, and emphasizes the subtle parallelism between these topological
invariants and the analytic invariants of normal surface singularities.
The main aim is to establish the identity (and unity) of these three objects: Seiberg--Witten invariant,
periodic constant of the zeta function, and the Euler characteristic of the lattice cohomology.

In order to formulate these correspondences,
let us consider the plumbed 4--manifold $\widetilde{X}$ associated with
$\Gamma$. Its second
homology $L$ is freely generated by the 2--spheres $\{E_v\}_{v\in\cV}$, and its
second cohomology $L'$
by the (anti)dual classes $\{E^*_v\}_{v\in\cV}$; the intersection
form $I=(\,,\,)$ embeds $L$ into $L'$, for details see (\ref{ss:11}).
(Equivalently, $L$ is the combinatorial lattice with intersection form $I$ associated with
$\Gamma$, and $L'$ is its dual lattice, and both are endowed with their natural bases). Set $x^2:=(x,x)$.

Let $K\in L'$ be the canonical class (see (\ref{eq:adjun})),
$\widetilde{\sigma}_{can}$ the canonical
$spin^c$--structure on $\widetilde{X}$ (with $c_1(\widetilde{\sigma}_{can})=-K$) 
and $\sigma_{can}\in \mathrm{Spin}^c(M)$ its restriction on $M$
(see (\ref{ss:SW})).

 Consider the  multi-variable
Taylor expansion $Z(\bt)=\sum p_{l'}\bt^{l'}$ at the  origin of
\begin{equation}\label{eq:INTR}\prod_{v\in \cV} (1-\bt^{E^*_v})^{\delta_v-2},\end{equation}
where for any $l'=\sum _vl_vE_v\in L'$ we write
$\bt^{l'}=\prod_vt_v^{l_v}$, and $\delta_v$ is the valency of $v$.
 This lives in $\setZ[[L']]$, the submodule of
formal power series $\setZ[[\bt^{\pm 1/d}]]$ in variables $\{t_v^{\pm 1/d}\}_v$, where $d=\det(-I)$.
The first identity is the following.

\vspace{2mm}

\noindent {\bf Theorem A.} \ {\it Fix some $l'\in L'$.
Assume that for any  $v\in\cV$ the  $E^*_v$--coordinate  of $l'$
is larger than or equal to $-(E_v^2+1)$. Then   the sum
$$\sum_{l\in L,\, l\not\geq 0}p_{l'+l }$$
equals  a multivariable quadratic function on $l'$, namely
\begin{equation}\label{eq:INNSW}-\frac{(K+2l')^2+|\cV|}{8}-\frs_{[-l']},
\end{equation}
where  the constant $\frs_{[-l']}$ depends only on the class
$[-l']$ of $-l'$ in $L'/L=H^2(M,\setZ)$.

Moreover, if $*$ denotes the (torsor) action of $L'/L$ on $\mathrm{Spin}^c(M)$, one has
$$\frs_{[l']}=\frsw_{[l']*\sigma_{can}}(M).$$
In particular, the normalized Seiberg--Witten invariant appears as the constant term of
the `combinatorial multivariable Hilbert polynomial' (\ref{eq:INNSW}). }

\vspace{2mm}

For the second identity let us consider the lattice cohomology
$\bH^*(\Gamma)$ associated with $\Gamma$.
It depends only on $M$, and it has a natural direct sum decomposition indexed by
$\sigma\in \mathrm{Spin}^c(M)$, namely
$\bH^*(\Gamma)=\oplus_{\sigma}\bH^*(\Gamma,\sigma)$. Let $eu(\bH^*(\Gamma,\sigma))$
be the normalized Euler characteristic of the corresponding summand;
for more details see (\ref{ss:LatCoh}).
Then one has

\vspace{2mm}

\noindent {\bf Theorem B.} \ {\it For any $\Gamma$ and $\sigma $ as above }
 $$-eu(\bH^*(\Gamma,\sigma))=
\frsw_{\sigma}(M(\Gamma)).$$

\vspace{2mm}

In fact, Theorem A was motivated by a similar formula
valid for equivariant geometric genera of normal surface singularities,
cf. \cite{coho3}; this is explained in (\ref{FM}). The combinatorial
quadratic Hilbert-polynomial type behaviour (\ref{eq:INNSW}) is proved in (\ref{th:1}).
It uses essentially the preparatory part of subsection (\ref{ss:LatCoh}), where we review and prove some
statements about lattice cohomology, and we identify the two combinatorial objects as
\begin{equation}\label{E}\frs_{[-l']}=-eu(\bH^*(\Gamma, [K+2l'])).
\end{equation} The second part of Theorem A rely  on
a surgery formula for the constant term $\frs$,
which fits perfectly with the surgery formula proved for the Seiberg--Witten invariant
$\frsw$ in \cite{BN}. This allows us to prove in subsection (\ref{s:SW}) the identity
 $\frs=\frsw$ by induction on $|\cV|$.

The  surgery formula  involves in a crucial way the `periodic constant' of a series
introduced in \cite{NO1,Opg}, see (\ref{ss:pc}).
In fact, via Theorem A, the Seiberg--Witten invariants can be interpreted
as the `multivariable periodic constants' of the series $Z(\bt)$.

The series $Z(\bt)$ was used in several articles studying invariants of surface singularities
\cite{CDG,CDGEq,CHR,CDGb,coho3}. Theorem A puts the results of these articles in a new light. Indeed,
as a consequence of the present work,
the identity $Z(\bt)$ with the analytic invariant $\cP(\bt)$, (see  (\ref{FM}) for its definition),
in some articles called Campillo--Delgado--Gusein-Zade type identity, implies automatically the
Seiberg--Witten Invariant Conjecture of Nicolaescu and the author, cf. \cite{NN1,Line}.
This provides a conceptual understanding how the Seiberg--Witten invariants appear in a natural way
in the world of  singularities, and why they can
 serve as topological candidates for the equivariant geometric genera.

Theorem B follows from Theorem A and (\ref{E}). It also
has the following interpretation. It is known that
the Seiberg--Witten invariant appears as the normalized Euler characteristic of the Heegaard--Floer
theory of Ozsv\'ath and Szab\'o, see \cite{OSzAB,NOSZ,Rus}
(or, of the Seiberg--Witten Floer homology).
Theorem B says that the normalized Euler characteristics of these cohomology theories and of the
lattice cohomology  coincide.
This supports the conjecture from \cite{NLC} which expects precise correspondence between
the corresponding cohomology modules and the normalization terms.

\section{Notations and preliminary results}
\labelpar{sec:main-results}

\subsection{Surface singularities and their graphs}\labelpar{ss:11}

Let \((X,o)\) be a complex normal surface singularity whose {\it link $M$
is a rational homology sphere}.  Let \(\pi:\widetilde{X}\to X\) be
a good resolution with dual graph  \(\Gamma\) whose vertices are
denoted by $\cV$. Hence  \(\Gamma\) is a tree and all the irreducible
exceptional divisors have genus \(0\). We will write  $s$, or $|\cV|$,
for the number of vertices.

Set \(L \coloneqq H_2 ( \widetilde{X},\setZ )\). It is freely
generated by the classes of the irreducible exceptional curves
\(\{E_v\}_{v\in\cV}\). They will also be identified with the
integral cycles supported on $E=\pi^{-1}(o)$. We set
$I_{vw}=(E_v,E_w)$. The intersection matrix $I=\{I_{vw}\}$
is negative definite, and any connected plumbing graph
with negative definite intersection form appears in this way for some
singularity. We write $e_v$ for $E_v^2$.

If  \(L'\) denotes
\(H^2( \widetilde{X}, \setZ )\), then the intersection form
 provides an embedding \(L \hookrightarrow
L'\) with factor $H^2(\partial
\widetilde{X},\setZ)\simeq H_1(M,\setZ)$; $[l']$ denotes the class of $l'$.
The form $(\,,\,)$ extends to
$L'$ (since $L'\subset L\otimes \setQ$).
$L'$ is freely generated by the duals \(E_v^*\), where we
prefer the convention $ ( E_v^*, E_w) =  -1 $ for $v = w$, and
$=0$ otherwise.

The {\it canonical class} $K\in L'$ is defined by the
{\it adjunction formulae}
\begin{equation}\label{eq:adjun}
(K+E_v,E_v)+2=0 \ \ \ \mbox{ for all $v\in\cV$.}
\end{equation}

For $l_1,l_2\in L'$ one writes $l_1\geq l_2$ if
$l_1-l_2=\sum r_vE_v$ with all $r_v\in\setQ_{\geq 0}$. Denote by $\cS'$  the Lipman
 cone $\{l'\in L'\,:\, (l',E_v)\leq 0 \ \mbox{for all
$v$}\}$.  It is generated over $\setZ_{\geq 0}$ by the
elements $E_v^*$. Since  {\em all the entries } of $E_v^*$
are {\em strict} positive, for any fixed $a\in L'$ one has:
\begin{equation}\label{eq:finite}
\{l'\in \cS'\,:\, l'\ngeq a\} \ \ \mbox{is finite}.
\end{equation}

\subsection{Motivation of Theorem A: Hilbert series.}\labelpar{FM}

One of the strongest analytic invariants of $(X,o)$ is its {\it
equivariant divisorial Hilbert series} $\cH(\bt)$. This is defined as follows
(for more details, see e.g. \cite[\S 2 and \S 3]{coho3} and \cite{CDGb}).

Fix a resolution $\pi$ of $(X,o)$ as in (\ref{ss:11}), let $c:(Y,o)\to (X,o)$ be the
universal abelian cover of $(X,o)$,  $\pi_Y :\widetilde{Y}\to Y$ the normalized
pullback of $\pi$ by $c$, and $\widetilde{c}:\widetilde{Y}\to \widetilde{X}$ the morphism which covers $c$.
Then $\cO_{Y,o}$ inherits  the {\em divisorial multi-filtration} (cf. \cite[(4.1.1)]{CDGb}):
\begin{equation*}\label{eq:03}
\cF(l'):=\{ f\in \cO_{Y,o}\,|\, {\rm div}(f\circ\pi_Y)\geq \widetilde{c}^*(l')\}.
\end{equation*}
Let $\hh(l') $ be the dimension of the $[l']$-eigenspace
of $\cO_{Y,o}/\cF(l')$. Then  the {\em equivariant
divisorial Hilbert series} is
\begin{equation*}\label{eq:04}
\cH(\bt)=\sum _{l'=\sum l_vE_v\in L'}
\hh(l')t_1^{l_1}\cdots t_s^{l_s}=\sum_{l'\in L'}\hh(l')\bt^{l'}\in
\setZ[[L']].
\end{equation*}
In $\cH(\bt)$ the exponents $l'$ of the terms $\bt^{l'}$  reflect the $L'/L\simeq H_1(M,\setZ)$ eigenspace
decomposition too.  E.g., $\sum_{l\in L}\hh(l)\bt^{l}$ corresponds
to the $H_1(M,\Z)$--invariants, hence it is the {\em Hilbert series}  of
$\cO_{X,o}$ associated with the $\pi^{-1}(o)$-divisorial
multi-filtration  (considered and intensively studied; see  e.g.
\cite{CHR} and the citations therein, or \cite{CDG}).

If $l'$ is in the `special zone' $l'\in -K+\cS'$, then by a vanishing (of a first cohomology), and by
Riemann-Roch, one obtains (see \cite{coho3}) that the expression
 \begin{equation}\label{eq:KV}
 \hh(l')+\frac{(K+2l')^2+|\cV|}{8}
\end{equation}
{\it depends only on the class $[l']\in L'/L$ of $l'$}. In several efforts to connect $\cH(\bt)$ with the
topology of the link (i.e. with the combinatorics of the graph $\Gamma$), the key bridge is done by the
series (cf.  \cite{CDG,CDGEq,CDGb,coho3}):
\begin{equation*}\label{eq:06}
\cP(\bt)=-\cH(\bt) \cdot \prod_v(1-t_v^{-1})\in \setZ[[L']].
\end{equation*}
Moreover, this identity (though it suggests that $\cP$ contains less information than $\cH$) can be
`inverted' (cf. \cite[(3.2.6)]{coho3}):
\begin{equation*} \label{eq:inv}
\hh(l')=\sum_{l\in L,\, l\not\geq 0} \bar{p}_{l'+l}, \ \
\mbox{where} \ \ \cP(\bt)=\sum_{l'}\bar{p}_{l'}\bt^{l'}.
\end{equation*}
($\cP$ is supported on
$\cS'$, see e.g.  \cite[(3.2.2)]{coho3}, hence the sum is finite, cf. (\ref{eq:finite})).
In particular, by (\ref{eq:KV}),
\begin{equation}\label{eq:KV2}
\sum_{l\in L,\, l\not\geq 0} \bar{p}_{l'+l}=-\mathrm{const}_{[-l']} -
\frac{(K+2l')^2+|\cV|}{8}
\end{equation}
for any $l'\in-K+\cS'$, where $\mathrm{const}_{[-l']}$ depends only on the class $[-l']$ of $-l'$.
The right hand side can be interpreted as a `multivariable Hilbert polynomial' of degree 2 associated with
the series $\cH(\bt)$, or with $\cP(\bt)$.

The point is that $\cP(\bt)$ has a {\it topological candidate},
namely  $Z(\bt)$ (for its definition see (\ref{eq:INTR}) from the Introduction),
 which for several singularities agrees with $\cP(\bt)$, cf. \cite{CDGEq,CDGb,coho3}.
In this way, for such singularities, one gets a topological characterization
of the constant terms from (\ref{eq:KV2}). Since these constants (equivariant geometric
genera, cf. \cite{coho3}) by the
conjectures of \cite{NN1,Line,trieste} equal  the normalized Seiberg--Witten invariants of the link
(for `nice' analytic structures),
one expects that the series $Z(\bt)$ admits a  multivariable Hilbert
polynomial too,  similar to the right hand side of
(\ref{eq:KV2}) with constant terms  the normalized Seiberg--Witten invariants.
This fact was announced in \cite{coho3}, and its proof  is the subject of the present article.

\subsection{The lattice cohomology}\label{ss:LatCoh}
First we recall the definition of the {\it lattice cohomology} from \cite{NLC} and \cite{NES}.
Let $Char:=\{k\in L'\,:\,
(k+l,l)\in 2\Z \ \ \mbox{for all $l\in L$}\}$ denote the set of characteristic elements of $L$.
It is  an $L'$--torsor: $Char=K+2L'$.

The set of $q$--cubes, $\calQ_{q}$, consists of
pairs $(k,I)\in Char\times \calP(\calj)$, $|I|=q$, (here $\calP(\calj)$ denotes the power set of $\calj$).
$\square_q=(k,I)$ can be identified with the `vertices' $\{k+2\sum_{j\in I'}E_j)_{I'}$, where
$I'$ runs over all subsets of $I$, of a $q$--cube in $L'\otimes \R$.
One defines the weight
function  induced by the intersection form
\begin{equation}\label{eq:q}
\q: Char\to \Q, \ \ \ \ \ \ \q(k):=-(k^2+|\cV|)/8,
\end{equation}
which extends to a weight--function  of the  $q$--cubes
\begin{equation*}
w(\square_q)=w((k,I))=\max_{I'\subset I}\big\{\, \q(k+2\sum_{j\in I'}E_j)\,\big\}.
\end{equation*}
Let $\calF_q$ be the direct product
of $\Z_{\geq 0}\times \calQ_q$ copies of $\Z$. We write the pair $(m,\square)$ as $U^m \square$.
 $\calF_q$ becomes a $\Z[U]$--module by $U(U^m \square)=U^{m+1} \square$. One defines
 $\partial:\calF_q\to \calF_{q-1}$ as follows.
 For $\square=(k,I)=(k,\{v_1,\ldots,v_q\})$ one set
\begin{equation*}\label{eq:partial}
\partial (k,I)=\sum_{l=1}^q (-1)^l\big( \,
U^{w(k,I)-w(k,I\setminus v_l)} (k,I\setminus v_l)-U^{w(k,I)-w(k+2E_{v_l},I\setminus v_l)}
(k+2E_{v_l},I\setminus v_l)\, \big).
\end{equation*}
Then $\partial\circ \partial =0$, hence $(\calF_*,\partial)$ is a chain complex of $\Z[U]$--modules.
The dual cochain complex is defined by
$\calF^q=\Hom_{\Z[U]}(\calF_q,\et^+_0)$, consisting of finitely supported morphisms with
$\phi(U^m\square)=U^m\phi(\square)$.
Here,  $\et_0^+$ denotes the $\Z[U]$--module
$\Z[U,U^{-1}]/U\Z[U]$ with grading $\deg(U^{-d})=2d$ ($d\geq 0$), as usual.
More generally, for any  $r\in\Q$ one defines $\et^+_r$, the same module as $\et_0^+$, but graded
(by $\Q$) in such a way that the $d+r$--homogeneous elements of $\et^+_r$ are isomorphic with the
$d$--homogeneous elements of $\et_0^+$.

$\calF^q$ is  a $\Z[U]$--module with a $\Q$--grading: $\phi\in \calF^q$ is
homogeneous of degree $r$ if for each $\square_q\in\calQ_q$
with $\phi(\square_q)\not=0$, $\phi(\square_q)$ is a homogeneous
element of $\et^+_0$ of degree $r-2\cdot w(\square_q)$.
The coboundary operator
$\delta:\calF^q\to \calF^{q+1}$ is defined by
 $\delta(\phi)(\square)=\phi(\partial (\square))$.
The cohomology of $(\calF^*,\delta)$ is the lattice cohomology of $\Gamma$, and it is
denoted by $\bH^*(\Gamma)$.
Since the vertices of a cube belong to the same class $Char/2L=K+2L'/2L$ (where a class has the form
$[k]=\{k+2l\}_{l\in L}\subset Char$), the complex $(\calF^*,\delta)$ and the cohomology
$\bH^*(\Gamma)$ have  natural direct sum decompositions of $\Z[U]$--modules:
$$(\calF^*,\delta)=\bigoplus_{[k]\in Char/2L}\ (\calF^*[k],\delta [k]) \ \ \mbox{and} \ \
\bH^*(\Gamma)=\bigoplus_{[k]\in Char/2L}\ \bH^*(\Gamma,[k]).$$
In fact, if $[k_1]=[k_2]$ then $w(k_1)-w(k_2)\in \Z$, and the set of degrees of $\calF^*[k]$ is
$2\Z$,  shifted by a rational number.
Since $\Gamma $ is negative definite, for each class $[k]\in Char/2L$ one has a well--defined
rational number
\begin{equation*}
d[k]:=-\max _{k\in[k]} \frac{k^2+|\cV|}{4}=2\cdot \min_{k\in[k]} \q(k).\end{equation*}
One defines an augmentation $\epsilon:\et^+_{d[k]}\to \calF^0[k]$ of the complex $(\calF^*[k],\delta[k])$;
the cohomology of the augmented complex is called the {\it reduced cohomology } $\bH^*_{red}(\Gamma,[k])$.
One has $\bH^q_{red}(\Gamma,[k]):=\bH^q(\Gamma,[k])$ for $q>0$, and  a direct sum decomposition
of $\Z[U]$--modules:
\begin{equation*}\label{eq:et}
\bH^0(\Gamma,[k])=\et^+_{d[k]}\oplus \bH^0_{red}(\Gamma,[k]).
\end{equation*}
$\bH^*_{red}(\Gamma,[k]):=
\oplus_{q\geq 0}\bH^q_{red}(\Gamma,[k])$  has finite $\Z$--rank.
The `normalized' Euler--characteristic of $\bH^*(\Gamma,[k])$ is
\begin{equation}\label{eq:EU}
eu(\bH^*(\Gamma,[k])):= -d[k]/2+\sum_q(-1)^q\rank_\Z (\bH^q_{red}(\Gamma,[k])).
\end{equation}
$\bH^*(\Gamma)$ and
$\bH^*_{red}(\Gamma)$ depend only on $M=M(\Gamma)$, and not on the plumbing graph $\Gamma$.
The involution $l'\mapsto -l'$ induces an isomorphism $\bH^*(\Gamma,[k])=\bH^*(\Gamma,[-k])$, hence
\begin{equation}\label{eq:INVOL}
eu(\bH^*(\Gamma,[k]))=eu(\bH^*(\Gamma,[-k])).
\end{equation}
\bekezd\label{be:cubes} {\bf Rectangles.}
In  combinatorial enumerations of the weighted $q$--cubes it is convenient to replace the set of
all cubes by only those ones which are supported on a fixed compact subset of $L'\otimes \R$.
In the simplest case we take rectangles: for any fixed class $[k]=k_0+2L\subset  Char$,
one takes two characteristic elements $k_1, \ k_2\in [k]$ with
$k_1\geq k_2$.
We denote by $R=R(k_1,k_2)$ the rectangle $\{k\in [k]\,:\, k_1\geq k\geq k_2\}$.
Similarly, for one fixed element $k_1\in[k]$, one can take $R=R(k_1)=\{k\in[k]\,:\, k_1\geq k\}$.
Once such an $R$ is identified, one  considers
the complex $(\calF^*(R),\delta(R))$, constructed similarly as $(\calF^*,\delta)$, consisting of all the
cubes $(k,I)$ with all vertices in $R$ (this fact will be denoted by  $(k,I)\in R$).
Using  $\min(\q|R):= \min_{k\in R}\q(k)$ one also defines the corresponding
augmented complex,  and one gets the corresponding lattice cohomologies $\bH^*(R,[k])$ and
$\bH^*_{red}(R,[k])$ with $\bH^0(R,[k])=\et^+_{2\min(\q|R)}\oplus \bH^0_{red}(R,[k])$. For more details, see
\cite{NLC}.  
We also define the `normalized Euler characteristic' of this lattice cohomology, namely
\begin{equation*}
eu(\bH^*(R,[k]):=-\min(\q|R)+\sum_{q\geq 0}(-1)^q\rank_\Z\, \bH^q_{red}(R,[k]).
\end{equation*}
Let $\cS'_{st}$ be the {\it strict Lipman cone} $\cS'_{st}=\{l'\in L'\,:\, (l',E_v)<0 \
\mbox{for all $v$}\}.$

\begin{proposition}\label{prop:tech} Fix a class $[k]$. Assume that $k_1\in[k]$ satisfies
$k_1\in -K+\cS'_{st}$, that is $(k_1,E_v)\leq e_v+1$ for any $v\in\cV$.
Then the following facts hold:

(a) \ For any $k\in [k]$, $k>k_1$, there exists some $E_v$ in the support $|k-k_1|$
of $k-k_1$, so that
$\q(k-2E_v)\leq \q(k)$.

(b) \ There exists  an increasing (computation)
sequence $\{z_n\}_{n\geq 0}$, $z_n\in L$, with $z_0=0$,
and  $z_{n+1}=z_n+E_{v(n)}$ for some $v(n)\in\cV$ when $n\geq 0$, satisfying:

\vspace{1mm}

(i) \ The coefficients of $z_n$ tends to infinity, that is $-\lim_{n\to \infty}
(z_n,E_v^*)=\infty$ for any $v$.

(ii) \ For fixed $n\geq 0$, let $x\in L$ be such that $x\leq z_n$ and $(x,E_{v(n)}^*)=(z_n,E_{v(n)}^*)$.
Then $\q(k_1+2x)\leq \q(k_1+2x+2E_{v(n)})$.

(iii) \ The restriction $\bH^*(R(k_1+2z_{n+1}),[k])\to \bH^*(R(k_1+2z_{n}),[k])$ is an isomorphism
of weighted $\Z[U]$--modules compatible with the augmentation.

\vspace{1mm}

In particular,  $\bH^*(\Gamma,[k])=\bH^*(R(k_1),[k])$ compatibly with the augmentation.

Moreover, in a similar way, one can find $k_2$ (with all its $E_v$--coefficients sufficiently
small) such that  $\bH^*(\Gamma,[k])=\bH^*(R(k_1,k_2),[k])$.
\end{proposition}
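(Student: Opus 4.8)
The plan is to treat the three parts essentially in the order they are stated, since (iii) depends on (ii), and the final ``In particular'' assertions are a limiting consequence of (i)+(iii). For part (a), fix $k\in[k]$ with $k>k_1$, write $k-k_1=\sum_v a_vE_v$ with $a_v\in\Z_{\geq 0}$, and compute
\begin{equation*}
\q(k)-\q(k-2E_v)=\tfrac{1}{8}\big((k-2E_v)^2-k^2\big)=\tfrac{1}{8}\big(-4(k,E_v)+4E_v^2\big)=\tfrac{1}{2}\big(E_v^2-(k,E_v)\big).
\end{equation*}
So I need some $v\in|k-k_1|$ with $(k,E_v)\leq E_v^2=e_v$. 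Writing $(k,E_v)=(k_1,E_v)+\sum_w a_w(E_w,E_v)$ and using the hypothesis $(k_1,E_v)\leq e_v+1$, it suffices to find $v$ in the support with $\sum_w a_w(E_w,E_v)\leq -1$. This is exactly the standard negative-definiteness fact: if $x=\sum a_wE_w\geq 0$, $x\neq 0$, then the vertices $v$ in its support on which $(x,E_v)<0$ (hence $\leq -1$) cannot all be absent — otherwise $(x,x)=\sum_{v\in|x|}a_v(x,E_v)\geq 0$, contradicting $x^2<0$. Applying this to $x=k-k_1$ gives the desired $v$, since on such a $v$ one has $(k,E_v)\leq (e_v+1)+(-1)=e_v$.

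For part (b), the sequence $\{z_n\}$ should be built by repeatedly adding basis vectors $E_v$ while staying inside (a shift of) the Lipman cone, so that the monotonicity in (ii) is forced. Concretely, I would start from $z_0=0$ and inductively choose $v(n)$ so that $k_1+2z_n$ still lies in $-K+\cS'$ and the step from $z_n$ to $z_{n+1}=z_n+E_{v(n)}$ is ``legal'' — mimicking Laufer-type or generalized-Laufer computation sequences; one can cycle through all vertices infinitely often, forcing (i). To verify (ii), given $x\leq z_n$ with $(x,E_{v(n)}^*)=(z_n,E_{v(n)}^*)$ — which says $x$ and $z_n$ have the same $E_{v(n)}$-coefficient, call it $c$ — compute as above
\begin{equation*}
\q(k_1+2x+2E_{v(n)})-\q(k_1+2x)=\tfrac{1}{2}\big(-E_{v(n)}^2+(k_1+2x,E_{v(n)})+E_{v(n)}^2\big)\!\!\quad\text{rearranged to}\quad\!\! -\tfrac12\big(E_{v(n)}^2-(k_1,E_{v(n)})-2(x,E_{v(n)})\big).
\end{equation*}
Since $x\leq z_n$ and their $E_{v(n)}$-coefficients agree, $z_n-x$ is an effective cycle not supported at $v(n)$, hence $(z_n-x,E_{v(n)})\geq 0$, so $(x,E_{v(n)})\leq (z_n,E_{v(n)})$; combined with the cone condition $(k_1+2z_n,E_{v(n)})\leq e_{v(n)}+1$, possibly after arranging the sequence so that the relevant inequality is strict or handling the boundary case via the $+1$ slack, one gets $\q(k_1+2x)\leq \q(k_1+2x+2E_{v(n)})$. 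The precise bookkeeping of the $+1$ slack versus the strict Lipman cone $\cS'_{st}$ is where I expect to have to be careful, and is the main obstacle: one must choose the computation sequence so that every single step satisfies (ii) for \emph{all} admissible $x$, not just $x=z_n$.

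For part (iii), the key point is that (ii) and (a) say precisely that passing from the rectangle $R(k_1+2z_n)$ to the larger $R(k_1+2z_{n+1})$ only adjoins cubes along which the weight function $\q$ does not decrease in the new direction $E_{v(n)}$; by the standard reduction lemma for lattice cohomology from \cite{NLC}, \cite{NES} — deformation retraction of the sublevel-set-type spaces, or the algebraic mapping-cone argument — such an enlargement induces an isomorphism on $\bH^*$ compatible with the $\Z[U]$-structure, the grading, and the augmentation (the augmentation level $\min(\q|R)$ is unchanged because the minimum of $\q$ on each rectangle is already attained ``deep inside''). Iterating, $\bH^*(R(k_1+2z_n),[k])$ is independent of $n$; by (i) the rectangles $R(k_1+2z_n)$ exhaust the half-space $R(k_1)$ and then (using (a) in the reverse direction, enlarging $k_1$ upward) exhaust all of $[k]$, so a colimit/stabilization argument gives $\bH^*(\Gamma,[k])=\bH^*(R(k_1),[k])$. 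Finally, running the symmetric construction below $k_1$ (choosing $k_2$ with very negative coefficients, using part (a)'s mirror image to show the downward enlargements are also isomorphisms) yields $\bH^*(\Gamma,[k])=\bH^*(R(k_1,k_2),[k])$ as claimed.
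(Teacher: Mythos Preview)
Your approach is essentially the paper's, and part (a) is exactly right. A few remarks on (b).

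First, the displayed computation for (ii) is garbled: the correct identity is
\[
\q(k_1+2x+2E_{v(n)})-\q(k_1+2x)=-\tfrac{1}{2}\big((k_1+2x,E_{v(n)})+e_{v(n)}\big),
\]
so you need $(k_1+2x,E_{v(n)})\leq -e_{v(n)}$. Your subsequent reasoning is correct: since $z_n-x\geq 0$ has zero $E_{v(n)}$--coefficient, $(x,E_{v(n)})\leq (z_n,E_{v(n)})$, and the case $x=z_n$ (which you already have from the construction via (a)) gives $(k_1+2z_n,E_{v(n)})\leq -e_{v(n)}$. Combining these two inequalities yields (ii) immediately. So the ``main obstacle'' you flag is not an obstacle at all: the inequality for general $x$ follows automatically from the inequality at $x=z_n$, with no extra care needed in choosing the sequence.

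Second, you are vague about actually constructing $\{z_n\}$ so that (i) holds. The paper's device is clean and worth knowing: fix $D=\sum_v d_vE_v\in\cS'\cap L$ with all $d_v>0$; apply (a) repeatedly (starting from $k_1+2D$ and stepping down) to get a finite path $0=y_0,y_1,\dots,y_{n_0}=D$ with $\q(k_1+2y_n)\leq\q(k_1+2y_{n+1})$; then set $z_{mn_0+n}:=mD+y_n$. Since $D\in\cS'$, $(k_1+2mD,E_v)\leq(k_1,E_v)\leq e_v+1$, so the same vertex choices work at every translate, and (i) is automatic because all $d_v>0$.

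For (iii) and the final statements both you and the paper defer to the reduction arguments in \cite{NLC}, which is appropriate.
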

\begin{proof} (a) Assume that $\q(k-2E_v)>\q(k)$ for any $E_v\in|k-k_1|$. This says that $(k-2E_v)^2<k^2$,
that is $(k,E_v)>E_v^2$, or $(k+K,E_v)\geq -1$. Since $(k_1+K,E_v)\leq -1$, one gets $(k-k_1,E_v)\geq 0$ for any $E_v$, hence $(k-k_1)^2\geq 0$, which is a contradiction.

(b) Fix a $D=\sum_vd_vE_v\in \cS'\cap L$ with $d_v\in\Z_{>0}$ for all $v$.   
By (a) we get an increasing
 computation sequence $\{y_n\}_{n=0}^{n_0}$ connecting 0 and $D$ so that
 $\q(k_1+2y_n)\leq \q(k_1+2y_{n+1})$.
 Then the sequence $\{z_n\}_{n\geq 0}:=
 \{mD+y_n\}_{m\geq 0;\, 0\leq n\leq n_0}$ satisfies (i) and
 $\q(k_1+2z_n)\leq \q(k_1+2z_{n+1})$. All other properties follow similarly as in \cite[p. 518]{NLC}.
\end{proof}

\bekezd\label{ss:23}
{\bf Counting weighted cubes and the
Euler--characteristic of the lattice cohomology.} \
The next result generalizes the classical fact that the alternating sum of the
number of $q$--cubes is the Euler characteristic of the cohomology.
For any finite set $A\subset [k]$ define
$$\cE(A):=\sum_{(k,I)\in A} (-1)^{|I|+1}\q((k,I)) \ \ \mbox{and } \ \
\calm_A(t):=\sum_{(k,I)\in A}(-1)^{|I|}t^{\q((k,I))}.$$

\begin{theorem}\label{th:LC} Let $R$ be a finite rectangle $R(k_1,k_2)$. Then
$\cE(R)=eu(\bH^*(R,[k]))$.
\end{theorem}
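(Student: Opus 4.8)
The plan is to prove the identity by induction on the number of vertices of the rectangle $R = R(k_1,k_2)$, or equivalently on the total lattice-distance $\sum_v r_v$ where $k_1 - k_2 = 2\sum_v r_v E_v$. For the base case, when $R$ consists of a single point $k$ (so $k_1 = k_2 = k$), the complex $(\calF^*(R),\delta(R))$ has only $0$-cubes and the only cube is $(k,\emptyset)$; then $\bH^0(R,[k]) = \et^+_{2\q(k)}$, all reduced cohomology vanishes, and both sides equal $-\q(k) = (-1)^{|\emptyset|+1}\q((k,\emptyset)) = \cE(R)$.

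For the inductive step, the idea is to peel off one `slab' of the rectangle. Write $R = R(k_1,k_2)$ and pick a vertex $v_0$ with $r_{v_0} \geq 1$; set $R' = R(k_1 - 2E_{v_0}, k_2)$ and let $R''$ be the `face' $R(k_1, k_1 - (k_1-k_2-2r_{v_0}E_{v_0}+2E_{v_0})+\cdots)$ — more precisely, the subrectangle of $R$ of codimension one lying in the hyperplane $\{$the $E_{v_0}$-coordinate equals that of $k_1\}$, which is a rectangle in one dimension lower. Every cube of $R$ either lies in $R'$ (those not touching the top slab), or lies in the top slab; the top-slab cubes split further according to whether $v_0 \in I$ or not. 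I would first establish the combinatorial additivity $\cE(R) = \cE(R') + (\text{contribution of the top slab})$ directly from the definition of $\cE$ as an alternating sum, pairing each cube $(k,I)$ with $v_0 \notin I$ in the top slab against the cube $(k,I\cup\{v_0\})$; the telescoping of the $\max$ in the weight function $w$ is what makes this work, and one should get that the slab contribution also has the form $\cE$ of a shifted lower-dimensional rectangle (or, more robustly, handle the slab directly).

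On the cohomology side, the key tool is the Mayer--Vietoris / long exact sequence relating $\bH^*(R)$, $\bH^*(R')$ and $\bH^*(R'')$ coming from the short exact sequence of cochain complexes $0 \to \calF^*(R)/\calF^*(R') \to \calF^*(R) \to \calF^*(R') \to 0$ (or its dual on chains), together with the identification of the quotient complex with the cochain complex of the slab, suitably weight-shifted; this is exactly the mechanism used in \cite{NLC} and \cite{NES} for such reduction arguments. Because Euler characteristic (in the normalized sense of (\ref{eq:EU}) and its rectangle analogue) is additive on short exact sequences — the $-\min(\q|R)/1$ normalization terms have to be tracked carefully, but they match the $-d[k]/2$-type bookkeeping exactly because $\min(\q|R)$ over a rectangle is attained and behaves additively under the decomposition by part (a) of Proposition~\ref{prop:tech} — one obtains $eu(\bH^*(R,[k])) = eu(\bH^*(R',[k])) + (\text{slab term})$ with the slab term matching the combinatorial slab contribution above.

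The main obstacle I anticipate is bookkeeping the normalization constants $-\min(\q|R)$ (and their analogues for $R'$ and the slab) through the long exact sequence: the three complexes are graded by $\et^+$-modules with different degree shifts, and showing that the alternating sum of ranks of reduced cohomologies plus these min-weight terms is genuinely additive requires knowing that the augmentation maps are compatible, i.e. that $\min(\q|R) = \min(\min(\q|R'), \min(\q|\text{slab}))$ with the right relative shift — and that the connecting homomorphism does not spoil the count. Once additivity of the normalized Euler characteristic on this decomposition is in hand and matched term-by-term with the purely combinatorial additivity of $\cE$, the induction closes. An alternative, perhaps cleaner, route would be to bypass cohomology entirely for the additivity and instead prove directly that $\cE(R) = -\min(\q|R) + \sum_q (-1)^q \rank_\Z \bH^q_{red}(R,[k])$ by computing both sides via the same `lattice Morse theory' filtration of $R$ by sublevel sets of $\q$ (as in \cite{NES}), where each elementary step adds one cube and changes each side by the same amount; I would try this first and fall back on the Mayer--Vietoris argument only if the Morse-theoretic step-counting becomes unwieldy.
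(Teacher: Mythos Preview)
Your alternative approach --- filtering by sublevel sets of $\q$ --- is exactly what the paper does, and it is considerably cleaner than the slab induction you lead with. The paper does not do step-by-step Morse theory, however; it packages the sublevel-set idea through the generating function $\calm_R(t) = \sum_{(k,I)\in R}(-1)^{|I|} t^{\q((k,I))}$ already introduced alongside $\cE$. The key input, quoted from \cite{NLC}, is the identification $\bH^q(R,[k]) \cong \bigoplus_{n\geq 0} H^q(S_n,\Z)$, where $S_n \subset L\otimes\R$ is the union of the real cubes of $R$ of weight $\leq n + \min(\q|R)$. Expanding $\calm_R(t)/(1-t)$ one sees that its coefficient of $t^{n+\min(\q|R)}$ is $\chi(S_n)$, so $(\calm_R(t) - t^{\min(\q|R)})/(1-t) = \sum_n \chi_{red}(S_n)\, t^{n+\min(\q|R)}$; letting $t\to 1$ (using $\calm_R(1)=1$ since $R$ is a contractible cube complex) gives $-\tfrac{d}{dt}\calm_R(1) = eu(\bH^*(R,[k]))$. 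Since $-\tfrac{d}{dt}\calm_R(1) = \cE(R)$ directly from the definitions, this finishes the proof in a few lines. Your sublevel-set instinct was the right one; the generating-function packaging is what makes the normalization constants take care of themselves.

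Your primary slab-induction route, by contrast, hits a real obstacle at the step you call ``telescoping''. When you pair the top-face cube $(k,I)$ (with $v_0\notin I$) against the thick cube $(k-2E_{v_0},I\cup\{v_0\})$, the signed difference of weights is $\q((k,I)) - \max\{\q((k,I)),\,\q((k-2E_{v_0},I))\}$, which is $0$ or $\q((k,I)) - \q((k-2E_{v_0},I))$ according to which face dominates. This is \emph{not} in general the weight data of a lower-dimensional rectangle with the restricted $\q$, so the slab contribution does not reduce to an $\cE$-term of strictly smaller type and the induction does not close on the class of rectangles alone. The same phenomenon appears on the cohomology side: the quotient $\calF^*(R)/\calF^*(R')$ is not isomorphic as a weighted $\Z[U]$-complex to $\calF^*(R'')$ for the face $R''$, because the $U$-exponents in $\partial$ record exactly these cross-face weight comparisons. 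One could in principle carry both sides through with this extra data, but at that point one is essentially reconstructing the sublevel-set argument in disguise.
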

\begin{proof}
We will reduce the result to the classical case via a certain geometric interpretation of the lattice cohomology from \cite[(3.1.12)]{NLC}. Note that $R$ can also be interpreted as a real rectangle
 in $L\otimes_\Z \R$ limited by the vertices $k_1$ and $k_2$. It has a natural cubic decomposition into the `real cubes' $(k,I)$ with all vertices in the rectangle.  For any non--negative integer  $n$ define $S_n$ as the union of all these real cubes of $L\otimes \R$ with vertices from $R$, and with weights $\leq
 n+\min(\q|R)$. Let $\chi(S_n)$ be its (classical) Euler--characteristic 
  and $\chi_{red}(S_n)=\chi(S_n)-1$ the Euler-characteristic of its reduced simplicial cohomology.
 Then, by \cite[(3.1.12)]{NLC}, one has for any $q\geq 0$ the $\Z$--module isomorphisms:
 \begin{equation}\label{eq:SN}
 \bH^q(R,[k])=\bigoplus_{n\geq 0}H^q(S_n,\Z), \ \ \
\bH^q_{red}(R,[k])=\bigoplus_{n\geq 0}\widetilde{H}^q(S_n,\Z).\end{equation}
In particular, if we write $\calm_R(t)/(1-t)$ as $\sum_{n\geq 0}a_nt^{n+\min(\q|R)}$, then
$$a_n=\sum_{(k,I)\in R \atop \q((k,I))\leq n+\min(\q|R)} \ (-1)^{|I|}=\chi(S_n). $$
Therefore, $$\frac{\calm_R(t)-t^{\min(\q|R)}}{1-t}=\sum_{n\geq 0}\chi_{red}(S_n)t^{n+\min(\q|R)},$$
hence, by (\ref{eq:SN}):
$$\lim_{t\to 1}\frac{\calm_R(t)-t^{\min(\q|R)}}{1-t}=\sum_{n\geq 0}\chi_{red}(S_n)
=\sum_{q\geq 0}(-1)^q\rank_\Z\, \bH^q_{red}(R,[k]),$$
that is, $-\frac{d}{dt}\calm_R(1)=eu(\bH^*(R,[k]))$, the wished statement of the theorem.
\end{proof}

\bekezd\label{ss:21} {\bf Counting weighted cubes and the zeta function $Z(\bt)$.} \
The next result provides the key step for the identification of the `lattice cohomology package'
and numerical invariants provided by the series  $Z(t)$.
Before we state it, let us recall  that $Char=K+2L'\subset L'$, hence $(k-K)/2$ runs over
$L'$ when $k$ runs over $Char$.

\begin{theorem}\label{th:ZW}
Let $\Gamma$ be a connected negative definite graph. Then
\begin{equation}\label{eq:ZW}
Z(\bt)=\sum_{k\in Char}\ \sum_{I\in  \calP(\calj)}\ (-1)^{|I|+1}\, \q((k,I))\,\bt^{\frac{1}{2}
(k-K)}.
\end{equation}
\end{theorem}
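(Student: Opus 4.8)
The plan is to expand the right-hand side of \eqref{eq:ZW} by regrouping the double sum according to the common ``base corner'' of each cube, and to recognize the resulting generating function as a product of one-variable geometric-type series, one for each vertex. First I would rewrite the summand: for a fixed $k\in Char$ and $I=\{v_1,\dots,v_q\}$, the cube $(k,I)$ has vertices $k+2\sum_{j\in I'}E_j$. Instead of indexing by the corner $k$, I would re-index by the corner with the \emph{smallest} exponent, i.e.\ substitute $k\mapsto k-2\sum_{j\in I}E_j$ is not quite symmetric; rather, I would keep $k$ as the distinguished corner and note that $\bt^{\frac12(k-K)}$ depends only on $k$, while $\q((k,I))=\max_{I'\subset I}\q(k+2\sum_{j\in I'}E_j)$ is a max of the weight function $\q$ over the $2^{|I|}$ vertices. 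The key combinatorial identity to exploit is that for a finite set of rationals $\{a_0,\dots\}$ attached to the vertices of a $q$-cube, the alternating sum $\sum_{I'\subset I}(-1)^{|I'|}(\text{something})$ telescopes; here the relevant statement is a ``summation by parts'' turning $\sum_I (-1)^{|I|+1}\max_{I'}\q(\cdots)$ into a manageable form.

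Concretely, the cleanest route is: fix the corner $k$ and the edge-set $I$; then $(-1)^{|I|+1}\q((k,I)) = (-1)^{|I|+1}\max_{I'\subseteq I}\q(k+2\sum_{j\in I'}E_j)$. I would use the elementary fact that for any function $g$ on the face lattice of the cube, $\q((k,I))=\sum_{\emptyset\neq J\subseteq I}(-1)^{|J|+1}\,[\text{min or max over sub-structure}]$ — but a slicker approach avoids this altogether. Namely, observe that $\sum_{k,I}(-1)^{|I|+1}\q((k,I))\bt^{\frac12(k-K)}$ can be obtained by applying the operator ``$-\frac{d}{ds}\big|_{s=1}$'' (or reading off an appropriate coefficient) to the simpler generating function $\sum_{k,I}(-1)^{|I|}\, s^{\q((k,I))}\,\bt^{\frac12(k-K)}$, exactly as in the proof of Theorem \ref{th:LC}, where $-\frac{d}{dt}\calm_R(1)=eu(\bH^*(R,[k]))$. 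So I would first prove the cleaner identity at the level of these two-variable series and then specialize. After the regrouping, each vertex $v$ contributes, via the weight $\q(k)=-(k^2+|\cV|)/8$ and the step $k\mapsto k+2E_v$, a local factor whose generating sum over the $\{0,E_v\}$ choice collapses to $(1-\bt^{E_v^*})^{-1}$-type terms; multiplying over $v\in\cV$, together with the combinatorial $\delta_v-2$ bookkeeping coming from how many cubes each lattice point is a corner of, yields precisely $\prod_v(1-\bt^{E_v^*})^{\delta_v-2}=Z(\bt)$.

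The main obstacle I anticipate is convergence/well-definedness: the sum over all of $Char$ and all of $\calP(\calj)$ is infinite, and $\q((k,I))$ grows quadratically in $k$, so the rearrangement into a product only makes sense inside $\setZ[[L']]$ after checking that, for each fixed exponent $l'\in L'$, only finitely many $(k,I)$ contribute to the coefficient of $\bt^{l'}$ — this uses negative definiteness and the finiteness statement \eqref{eq:finite}, exactly the same mechanism that makes $Z(\bt)$ itself a well-defined element of $\setZ[[L']]$. Once the bookkeeping of ``which cubes have $k$ as their minimal corner, and with what sign and weight'' is set up cleanly, the algebra reduces to the one-variable identity
\begin{equation*}
\sum_{m\geq 0}(-1)^{\epsilon}(\text{weight contribution})\,x^{m} \ = \ (1-x)^{\delta_v-2}\ \text{locally at } v,
\end{equation*}
which is routine. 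So the real work is (i) the finiteness argument, and (ii) choosing the right distinguished corner so that the max in the definition of $w((k,I))$ becomes a \emph{telescoping} sum rather than a genuine maximum; I expect (ii) to be where the intersection-form inequalities (as in Proposition \ref{prop:tech}(a)) enter, guaranteeing that along the relevant edges $\q$ is monotone so that $\max$ is attained at a predictable vertex.
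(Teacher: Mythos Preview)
Your plan has a genuine gap at the decisive step. You hope that after regrouping by base corner each vertex $v$ will contribute an independent local factor, so that the product over $\cV$ gives $Z(\bt)$. But the increment $\q(k+2\sum_{j\in I'}E_j)-\q(k)=-\tfrac12\big[(k,\sum_{j\in I'}E_j)+(\sum_{j\in I'}E_j)^2\big]$ contains the cross terms $(E_j,E_{j'})$ for every edge of $\Gamma$ with both endpoints in $I'$. Hence the maximum over $I'\subset I$ does \emph{not} decouple into vertex-by-vertex choices, and there is no a priori reason for the sum to factor over $\cV$. Your own caveat that the $\max$ must become a ``telescoping sum'' with a ``predictable vertex'' is exactly the obstruction; Proposition~\ref{prop:tech}(a) is of no help here, since it controls a single step, not all directions of a cube simultaneously. (Incidentally, the finiteness worry is a non-issue: for each fixed exponent $\tfrac12(k-K)$ there is exactly one $k$ and only $2^{|\cV|}$ subsets $I$, so every coefficient on the right of \eqref{eq:ZW} is a finite sum by definition; \eqref{eq:finite} is not needed.)

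The paper's proof resolves the coupling by induction on $|\cV|$. One fixes an \emph{end}-vertex $u$, splits the sum according to whether $u\in I$, and uses that for an end-vertex the only cross term involving $u$ is $(E_{I'},E_u)\in\{0,1\}$. Because $a_u\equiv e_u\pmod 2$, the sign of the increment $-a_u+e_u+2(E_{I'},E_u)$ is determined by $a_u$ alone, independently of $I'$; this is precisely your monotonicity idea, but it only works one end-vertex at a time. Summing over $a_u$ then gives the recursion $\calU_\Gamma(\bz)=\calU_{\Gamma_0}(\bz_0)\cdot(1-z_{u_0}^2)(1-z_u^2)^{-1}z_u^{e_u+2}$, which unwinds to the desired product. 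So the correct execution of your strategy is sequential (peel off end-vertices), not parallel; attempting it for all $v$ at once collides with the edge cross-terms and does not yield a product directly.
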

\noindent Since $\sum_I(-1)^{|I|}=0$, here $\q((k,I))$ can be replaced
by $\q((k,I))+c$ for any constant $c$.
\begin{proof}
For each $k=\sum_va_vE^*_v\in Char$,
where $a_v\equiv e_v $ (mod 2), write
$\bz^a:=\prod_vz_v^{a_v}$ and set the counting function
$$\calU_\Gamma(\bz):= \sum_{k\in Char}\ \sum_{I\in  \calP(\calj)}\ (-1)^{|I|+1}\, \q((k,I))\, \bz^a.$$
We determine $\calU_\Gamma$ by induction on $|\calj|$.
If $|\calj|=1$ and the decoration of the unique vertex is $e<0$,  then $|e|E^*=E$, $(E^*)^2=1/e$, and $k=aE^*\in Char$ with
$a\equiv e$ (mod 2). Hence
\begin{equation}\label{eq:egy}
8\,\calU_\Gamma(z)=\sum_{a\equiv e \ (mod\, 2)} \Big[
-\frac{a^2}{|e|}+\max\Big\{ \, \frac{a^2}{|e|}, \frac{(a-2e)^2}{|e|}\, \Big\}
\ \Big]\cdot z^a.\end{equation}
If $a\leq e$ then $a^2\geq (a-2e)^2$, hence the coefficient is vanishing. Otherwise we write
$a=e+2n$ for $n\in\Z_{>0}$ and we get that $\calU(z)=\sum_{n\geq 1}nz^{e+2n}=z^{e+2}/(1-z^2)^2$.

Next, we assume $|\calj|\geq 2$.
Let $u$ be a fixed end--vertex of $\Gamma$
(that is, $\delta_u=1$). Set $\Gamma_0:=\Gamma\setminus u$ the graph obtained by deleting $u$ and its
supporting edge. If $k=\sum_va_vE^*_v\in Char(\Gamma)$,
we write $k_0=\sum_{v\not=u}a_vE^*_v\in Char(\Gamma_0)$. The series $\calU_\Gamma$ can be written
as a sum $\calU_\Gamma^{(1)}+\calU_\Gamma^{(2)}$, where the first
series is the sum over those subsets $I$ which does not contain $u$,
while the second is the sum over the other terms.
 For the first case when $I\not\ni u$
\begin{equation}\label{eq:w1}
\calU_\Gamma^{(1)}(\bz)=
\calU_{\Gamma_0}(\bz_0)\cdot \sum_{a_u\equiv e_u\, (2)}  z_u^{a_u},
\end{equation}
where $\bz_0$ are the variables $\{z_v\}_{v\not=u}$ corresponding to $\Gamma_0$.
Indeed, one has  $\q((k,I))-\q(k)=\q((k_0,I))-\q(k_0)$ and $\sum_{I\not\ni u}(-1)^{|I|}\q(k)=
\sum_{I\not\ni u}(-1)^{|I|}\q(k_0)=0$, hence
\begin{equation}\label{eq:3}
\begin{split}
& \sum_{k\in Char(\Gamma)}\   \sum_{I\not\ni u }\ (-1)^{|I|+1}\, \q((k,I)) \bz^a=\\
& \sum_{k\in Char(\Gamma)}\ \sum_{I\not\ni u }\ (-1)^{|I|+1}\,
\big(\q((k,I))-\q(k)\big) \bz^a
= \\
& \sum_{k\in Char(\Gamma)}\ \sum_{I\not\ni u }\ (-1)^{|I|+1}\,
\big(\q((k_0,I))-\q(k_0)\big) \bz^a = \\
&  \sum_{k_0\in Char(\Gamma_0)}\ \sum_{a_u\equiv e_u\, (2)}  z_u^{a_u} \sum_{I\not\ni u }\
(-1)^{|I|+1}\,
\q((k_0,I))\,\prod_{v\not=u}z_v^{a_v}.
\end{split}
\end{equation}
In the second sum $u\in I$; set $I=I'\cup u$ with $u\not\in I'$. Since
$$(k+2E_{I'}+2E_u)^2-(k+2E_{I'})^2=4\big(-a_u+e_u+2(E_{I'},E_u)\,\big), $$
where  $(E_{I'},E_u)\in\{0,1\}$,  one gets that
$$\left\{\begin{array}{ll}
\mbox{if $-a_u+e_u<0$, then} & \q((k,I))=\q((k+2E_u,I')),\\
\mbox{if $-a_u+e_u\geq 0$, then} & \q((k,I))=\q((k,I')). \end{array}\right.$$
Hence $\calU_\Gamma^{(2)}(\bz) $ splits into two sums:
\begin{equation*}\begin{split}
\sum_{k_0\in Char(\Gamma_0)}\ \sum_{a_u\equiv e_u\, (2) \atop a_u\leq e_u} &
\sum_{I=I'\cup u }\ (-1)^{|I|+1}\,
\q((k,I'))\,\bz^a+
\\ \ &
\sum_{k_0\in Char(\Gamma_0)}\ \sum_{a_u\equiv e_u\, (2) \atop a_u> e_u}
\sum_{I=I'\cup u }\ (-1)^{|I|+1}\,
\q((k+2E_u,I'))\,\bz^a.\end{split}
\end{equation*}
For the second one we use $E_u=-e_uE_u^*-E^*_{u_0}$, where $u_0$ is the adjacent vertex of $u$ in $\Gamma$.
Then, computing both sums
by similar argument  as in (\ref{eq:3}), we get
\begin{equation}\label{eq:w2}
\calU_\Gamma^{(2)}(\bz)=-\calU_{\Gamma_0}(\bz_0)\cdot \sum_{a_u\equiv e_u\, (2)\atop
a_u\leq e_u}  z_u^{a_u}-\calU_{\Gamma_0}(\bz_0)\cdot z_{u_0}^2\cdot \sum_{a_u\equiv e_u\, (2)\atop
a_u> e_u}  z_u^{a_u}.
\end{equation}
The contributions (\ref{eq:w1}) and  (\ref{eq:w2})  combined provide
\begin{equation*}\label{eq:ind}
\calU_\Gamma(\bz)=\calU_{\Gamma_0}(\bz_0)(1- z_{u_0}^2)\cdot \sum_{a_u\equiv e_u\, (2)\atop
a_u> e_u}  z_u^{a_u}=\calU_{\Gamma_0}(\bz_0)\cdot \frac{1- z_{u_0}^2}
{1-z_u^2}\cdot z_u^{e_u+2}.
\end{equation*}
This as an inductive step, together with the identity valid for $|\calj|=1$, give
\begin{equation*}\label{eq:U}
\calU_\Gamma(\bz)=\prod_{v\in\calj}z_v^{e_v+2}\cdot \prod_{v\in\calj}(1-z_v^2)^{\delta_v-2}.
\end{equation*}
From this and the from the definition of $\calU_\Gamma$ we obtain
$$\sum_{k\in Char}\ \sum_{I\in\calP(\calj)}\
(-1)^{|I|+1}\, \q((k,I))\, \prod_{v\in\cV} x_v^{\frac{a_v-e_v-2}{2}}=
\prod_{v\in\calj}(1-x_v)^{\delta_v-2}.$$
Then (\ref{eq:ZW}) follows  via the substitution $x_v=\bt^{E_v^*}$, since $K=\sum_v(2+e_v)E^*_v$.
\end{proof}

\section{The proofs of Theorems A and B}\labelpar{s:frs}

\subsection{The definition of the invariant $\frs$ and its relation with $\bH^*(\Gamma)$.}\labelpar{ss:frs}
Let $\Gamma$ be a  graph  as in (\ref{ss:11}) and  $Z(\bt)$ the series defined in the introduction.
\begin{theorem}\labelpar{th:1} 
(a) For any $l'\in L'$, the expression
\begin{equation}\label{eq:s}
-\sum_{l\in L,\, l\ngeq 0} p_{l'+l}-\frac{(K+2l')^2+|\cV|}{8}
\end{equation}
depends only on the class $[l']\in L'/L$ of\,  $l'=\sum_{v\in\cV}a_vE^*_v$, provided that
 $a_v\geq -e_v-1$ for all $v\in\cV$. 
(Since  $Z$ is supported on
$\cS'$, the  sum in (\ref{eq:s}) is finite by (\ref{eq:finite}).) 

(b)  Consider the  map $\frs:L'/L\to\setQ$, \ $[l']\mapsto \frs_{[l']}$,  where
$\frs_{[-l']}$ is the expression (\ref{eq:s}).
Then the set $\{\frs_{[l']}\}_{[l']}$  is independent of the negative definite plumbing representation
$\Gamma$, it
 depends only on the oriented plumbed 3--manifold $M=M(\Gamma)$.
In fact, for any $l'$, one has
\begin{equation}\label{eq:SEU}
\frs_{[-l']}=-eu(\bH^*(\Gamma,[K+2l'])).
\end{equation}
\end{theorem}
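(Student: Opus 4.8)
The plan is to combine Theorem~\ref{th:ZW} with Theorem~\ref{th:LC} and Proposition~\ref{prop:tech}, reducing the statement to a finite book-keeping over a large rectangle. First I would fix $l'=\sum_v a_vE^*_v$ with $a_v\geq -e_v-1$ and set $k_1:=K+2l'$, so that the hypothesis $a_v\geq -e_v-1$ is exactly the condition $(k_1,E_v)\leq e_v+1$ appearing in Proposition~\ref{prop:tech}, i.e. $k_1\in -K+\cS'_{st}$ (up to the boundary of the cone, which one checks causes no trouble). By Theorem~\ref{th:ZW} we have the identity $Z(\bt)=\sum_{k\in Char}\sum_{I}(-1)^{|I|+1}\q((k,I))\,\bt^{(k-K)/2}$, so the coefficient $p_{l'+l}$ of $\bt^{l'+l}$ equals $\cE$ evaluated on the single fiber $\{k=K+2l'+2l\}$, i.e. $p_{m}=\sum_{I\in\calP(\cV)}(-1)^{|I|+1}\q((K+2m,I))$ for $m\in L'$. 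Hence $\sum_{l\in L,\,l\ngeq 0}p_{l'+l}$ is the truncated count $\cE(A)$ over the set $A$ of cubes $(k,I)$ with $k\in[k_1]$ whose base vertex $k$ satisfies $(k-k_1)/2\ngeq 0$.

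Second, I would relate this truncated count to the lattice cohomology of a rectangle. Choose, via Proposition~\ref{prop:tech}, elements $k_1$ (already fixed) and $k_2\in[k_1]$ with very negative $E_v$-coordinates so that $\bH^*(\Gamma,[k_1])=\bH^*(R(k_1,k_2),[k_1])$ compatibly with the augmentation. On the finite rectangle $R=R(k_1,k_2)$, Theorem~\ref{th:LC} gives $\cE(R)=eu(\bH^*(R,[k_1]))=eu(\bH^*(\Gamma,[k_1]))$. Now $\cE(R)$ counts cubes with base vertex $k$ in the box $k_1\geq k\geq k_2$, while $\sum_{l\ngeq 0}p_{l'+l}$ counts cubes with base vertex in the region $\{k\leq k_1,\ k\ngeq k_1\}$ reached from $k_1$ by subtracting elements of $L$. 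The two index sets differ by (i) cubes sitting "above'' $k_1$, i.e. with some vertex $>k_1$ — these should cancel because part~(a) of Proposition~\ref{prop:tech} says along any $k>k_1$ there is a direction $E_v$ with $\q(k-2E_v)\leq \q(k)$, which is precisely the combinatorial mechanism making the contribution of that half-space telescope to zero (this is also why the restriction maps in Proposition~\ref{prop:tech}(b)(iii) are isomorphisms); and (ii) cubes whose base vertex lies below $k_2$ in some coordinate — by the choice of $k_2$ (coordinates sufficiently small, using (\ref{eq:finite})) the intersection of $\cS'$ with that region is empty, so $Z$ has no support there and those $p$'s vanish. Reconciling these two truncations, together with the normalization term: the contribution of the "diagonal'' cube $(k_1,\cV)$, resp. the min-weight correction $-\min(\q|R)$ versus $\q(k_1)=-(k_1^2+|\cV|)/8=-((K+2l')^2+|\cV|)/8$, is exactly the quadratic term subtracted in (\ref{eq:s}). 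Carrying this out yields $-\sum_{l\ngeq 0}p_{l'+l}-\frac{(K+2l')^2+|\cV|}{8}=-eu(\bH^*(\Gamma,[k_1]))=-eu(\bH^*(\Gamma,[K+2l']))$, which proves both (\ref{eq:SEU}) and, since the right side manifestly depends only on $[l']$ and (by the last paragraph of (\ref{ss:LatCoh})) only on $M$, parts (a) and (b).

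Third, for the independence-of-$\Gamma$ claim in (b), once (\ref{eq:SEU}) is established there is nothing more to do: the lattice cohomology $\bH^*(\Gamma,[k])$ and its reduced part depend only on $M=M(\Gamma)$, hence so does $eu$, hence so does the set $\{\frs_{[l']}\}$; the identification $[l']\mapsto [K+2l']$ between $L'/L$ and $Char/2L$ is the standard torsor translation, compatible with the involution (\ref{eq:INVOL}).

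\textbf{Main obstacle.} The delicate point is step two: matching the two truncations precisely. The sum $\sum_{l\ngeq 0}p_{l'+l}$ runs over a cone-shaped region attached to $k_1$, whereas $\cE(R)$ runs over a rectangular box, and one must show the symmetric differences contribute zero. The "upper'' discrepancy (vertices $>k_1$) is the real content — it must be handled by the weight-comparison of Proposition~\ref{prop:tech}(a), which forces the alternating sum of $\q$-values over the relevant sub-cubes to cancel; making this cancellation rigorous (rather than hand-wavy "telescoping'') is where the argument needs care, and it is essentially the same computation that underlies the isomorphism in Proposition~\ref{prop:tech}(b)(iii). The "lower'' discrepancy is easier, being a support statement for $Z$ via (\ref{eq:finite}). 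Keeping track of the single leftover normalization term $\q(k_1)$ through all of this is routine but must be done carefully so the quadratic expression in (\ref{eq:s}) comes out with the correct sign and no stray constant.
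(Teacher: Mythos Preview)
Your plan is essentially the paper's own proof: fix $k_1=K+2l'$, use Theorem~\ref{th:ZW} to turn $\sum_{l\ngeq 0}p_{l'+l}$ into a weighted cube-count, compare with $\cE(R(k_1,k_2))$, and invoke Theorem~\ref{th:LC} and Proposition~\ref{prop:tech} to identify the result with $eu(\bH^*(\Gamma,[k_1]))$. The overall architecture is correct.

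One bookkeeping point where your description diverges from what actually happens: the quadratic term $\q(k_1)=-(K+2l')^2/8-|\cV|/8$ does \emph{not} come from ``the diagonal cube $(k_1,\cV)$'' nor from the normalization $-\min(\q|R)$ in the definition of $eu$. In the paper's accounting, the index set for $\sum_{l\ngeq 0}p_{l'+l}$ consists of characteristic elements $k$ with $k\not\geq k_1$ (some coordinate strictly below), whereas cubes in $R(k_1,k_2)$ have \emph{all} vertices $\leq k_1$. The symmetric difference is handled by enlarging to an intermediate region $\widetilde R$ (allowing $\exists\,l_v\leq 0$, and extending above $k_1$ along a computation sequence $z_{\tilde n}$), together with a ``boundary'' set $\partial\widetilde R$ of cubes touching the faces $\{l_v=0\}$. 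One then has $\sum_{l\ngeq 0}p_{l'+l}=\cE(\widetilde R)-\cE(\partial\widetilde R)$, and the computation sequence of Proposition~\ref{prop:tech}(b)(ii) collapses \emph{both} pieces: $\cE(\widetilde R)=\cE(R(k_1,k_2))$ and $\cE(\partial\widetilde R)=-\q(k_1)$. So the term $\q(k_1)$ is the residue of an entire face-complex telescoping down, via the same mechanism you correctly identify for the ``upper'' discrepancy. Your note that the region ``$\{k\leq k_1,\ k\ngeq k_1\}$'' is not quite the right index set (it should be $\{k\not\geq k_1\}$, which includes mixed-sign $l$) is exactly why this extra boundary bookkeeping is needed; once you track that carefully the argument goes through as you outlined.
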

\begin{proof} We fix $k_1=K+2l'$ and $k_2$ as in (\ref{prop:tech}), that is  $(k_1,E_v)\le e_v+1$
and all the coefficients of $k_2 $ are sufficiently small ($k_2$ does not play on essential role,
it only assures the finitness  of the rectangles), and
a computation sequence $\{z_n\}_{n\geq 0}$ as in (\ref{prop:tech})(b).  Set
$$R':=\{k\in [k]\,:\, k\geq k_2, \ k-k_1=\sum  l_vE_v \ \mbox{so that}\ \exists \, l_v\leq 0\}.$$
Although $R'$ is not finite, $R'\cap \cS'$ is finite by (\ref{eq:finite}). Fix some
$\tilde{n}$ so that $R'\cap \cS'\subset R(k_1+2z_{\tilde{n}},k_2)$ and define
$\widetilde{R}:= R'\cap R(k_1+2z_{\tilde{n}},k_2)$. Take also
$$\partial\widetilde{R}:=
\{k\in [k]\,:\, k_1\leq k\leq k_1+2z_{\tilde{n}}, \ k-k_1=\sum  l_vE_v \ \mbox{so that}\ \exists \,
l_v= 0\}.$$
Then, by Theorem~\ref{th:ZW}
\begin{equation}\label{eq:tilde}
\sum_{l\in L,\, l\ngeq 0} p_{l'+l}=\cE(\widetilde{R})-\cE(\partial\widetilde{R}).\end{equation}
Now, we claim that by combinatorial cancellation in the sum,
$\cE(\widetilde{R})=\cE(R)$, where $R=R(k_1,k_2)$.
This follows by induction using the sequence $\{z_n\}_{0\leq n\leq \tilde{n}}$,
since $\cE(R'\cap R(k_1+2z_{n+1},k_2))=\cE(R'\cap R(k_1+2z_n,k_2))$.
Indeed, for any $I$ containing $v(n)$ and cube $(k,I)\in
R'\cap (R(k_1+2z_{n+1},k_2)\setminus  R(k_1+2z_n,k_2))$ one has $\q((k,I))=\q((k+2E_{v(n)},I\setminus v(n)))$
by (\ref{prop:tech})(b)(ii).
Similarly, one gets $\cE(\partial\widetilde{R})=-\q(k_1)$. Hence (\ref{eq:tilde}) reads as
\begin{equation}\label{eq:tilde2}
-\q(K+2l')+\sum_{l\in L,\, l\ngeq 0} p_{l'+l}=\cE(R).\end{equation}
The right hand side is $eu(\bH^*(R,[k_1])$ by Theorem~\ref{th:LC}, which equals
 $eu(\bH^*(\Gamma,[k_1])$ by  Proposition~\ref{prop:tech}. In particular, the left hand side too
depends only on $[l']$.
 It is invariant under blow up of the graph since the lattice cohomology is so, cf. \cite{NLC,NES}.
\end{proof}
\subsection{The surgery formula for $\frs$}\labelpar{s:sur}

\bekezd\labelpar{ss:first}{\bf The additivity formula.} \
Similarly as in the proof of Theorem~\ref{th:ZW}, we change the
variables of the series $Z(\bt)$.  By setting $x_v:=\bt^{E^*_v}$ for all $v\in\cV$, $Z(\bt)$
transforms into $\cZ_\Gamma(\bx)=\prod _{v\in\cV} (1-x_v)^{\delta_v-2}$,
whose Taylor series  at the origin is denoted by $\sum q_a\bx^a$, where
$\bx^a=x_1^{a_1}\cdots x_s^{a_s}$. The exponents $a_v$ are the coordinates
of $L'$ in the basis $\{E^*_v\}_v$, i.e. if $l'=\sum_va_vE^*_v$ then $a_v=-(l',E_v)$ and
$\bt^{l'}$ transforms into $\bx^a$.
In particular, $q_a=p_{l'}$, and we also use the notation $a$ for $l'\in L'$.
 For  any fixed \ $l'=\sum_v a_vE^*_v$ we define
\begin{equation}\label{eq:haha}
h^\Gamma_a:=\sum_{l\in L,\, l\not\geq 0}\, p_{l'+l}=\sum_{b\in S_\Gamma(a)}\, q^\Gamma_b,\end{equation}
where $S_\Gamma(a)=\{b\in L'(\Gamma)\,:\, b=a+\sum n_vE_v,
\ n_v\in\setZ, \exists\ n_v<0\}$. We assume that $s\geq 2$
and we fix an {\it end--vertex} $u$ of $\Gamma$. We set
$$h^u_a:=\sum \, q^\Gamma_b \ \ \ \ \ \
\ (\mbox{sum over \ \   $b=a+\sum n_vE_v$, \ $n_v\in\Z$, \ $n_u<0$}). $$
The inclusion of the subgraph $\Gamma\setminus u$ induces $i:L(\Gamma\setminus u)\to L(\Gamma)$,
$i(E_v)\mapsto E_v$
(the symbol $i$ sometimes is omitted), and its dual, the
restriction $R:L'(\Gamma)\to L'(\Gamma\setminus u)$ with 
\begin{equation}\label{eq:RRR}
\left\{\begin{array}{l}
\mbox{$R(E^{*,\Gamma}_v)=E^{*,\Gamma\setminus u}_v$ for $v\not=u$, and $R(E^{*,\Gamma}_u)=0$};\\
\mbox{$R(E_v^\Gamma)=E_v^{\Gamma\setminus u}$ for $v\not=u$, and $R(E^\Gamma_u)=-E^{*,\Gamma\setminus u}_w$,}
\end{array}\right.\end{equation}
where $w$ is the adjacent vertex of $u$.  We abridge  $R(\sum_v a_vE^*_v)$ by $R(a)$.
\begin{proposition}\labelpar{prop:ad}
Assume that $w$, the adjacent vertex of $u$ in $\Gamma$, has valency two.
Then,   if $a_u\gg 0$ (compared with $R(a)$), one has
$$h^\Gamma_a=h^u_a+h^{\Gamma\setminus u}_{R(a)}.$$
\end{proposition}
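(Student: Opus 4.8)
The plan is to exploit the fully factored form of $\cZ_\Gamma$ together with the explicit action of the intersection form $I$. Since $u$ is an end--vertex ($\delta_u=1$) and $w$ has valency $2$ in $\Gamma$ by hypothesis, in the product $\cZ_\Gamma(\bx)=\prod_{v\in\cV}(1-x_v)^{\delta_v-2}$ the $u$--factor is $(1-x_u)^{-1}$ and the $w$--factor is $(1-x_w)^0=1$; hence $\cZ_\Gamma=(1-x_u)^{-1}\cdot\cZ''$, where $\cZ'':=\prod_{v\in\cV\setminus\{u,w\}}(1-x_v)^{\delta_v-2}$ involves neither $x_u$ nor $x_w$. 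On the other hand $w$ becomes an end--vertex of $\Gamma\setminus u$ while all other valencies are unchanged, so $\cZ_{\Gamma\setminus u}=(1-x_w)^{-1}\cdot\cZ''$ with the \emph{same} $\cZ''$. Denote by $q''$ the Taylor coefficients of $\cZ''$, by $b_v$ the $E^*_v$--coordinate of an element of $L'$, and, for $b$ in $L'(\Gamma)$ or $L'(\Gamma\setminus u)$, by $\widehat b$ the tuple $(b_v)_{v\in\cV\setminus\{u,w\}}$. Reading off coefficients from these two factorizations,
$$q^\Gamma_b=[b_u\geq 0]\,[b_w=0]\,q''_{\widehat b}\qquad\text{and}\qquad q^{\Gamma\setminus u}_c=[c_w\geq 0]\,q''_{\widehat c},$$
where $[\,\cdot\,]$ is the indicator function.

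Next I would split the summation set $S_\Gamma(a)$ according to the sign of the $E_u$--coefficient $n_u$ of $b-a$. The part with $n_u<0$ is, by definition, exactly $h^u_a$, so the claim reduces to showing that the remaining part --- call it $T$, the $b=a+n_uE_u+\sum_{v\neq u}n_vE_v$ with $n_u\geq 0$ and some $n_v<0$ for $v\neq u$ --- satisfies $\sum_{b\in T}q^\Gamma_b=h^{\Gamma\setminus u}_{R(a)}$. Fix $n_0=(n_v)_{v\neq u}$ with some negative entry, and let $w'$ be the unique neighbour of $w$ in $\Gamma$ other than $u$. A short computation with $I$ gives $b_w=a_w-n_u-e_wn_w-n_{w'}$, so $q^\Gamma_b$ can be nonzero only for the single value $n_u=a_w-e_wn_w-n_{w'}$, and then the constraint $n_u\geq 0$ reads $a_w-e_wn_w-n_{w'}\geq 0$. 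Since in $\Gamma$ only $w$ is adjacent to $u$, the coordinates $b_v$ with $v\in\cV\setminus\{u,w\}$ do not involve $n_u$ and coincide with those of $c:=R(a)+\sum_{v\neq u}n_vE^{\Gamma\setminus u}_v\in S_{\Gamma\setminus u}(R(a))$, so $\widehat b=\widehat c$; moreover $w$ has $w'$ as its only neighbour in $\Gamma\setminus u$, with the same self--intersection $e_w$, whence $c_w=a_w-e_wn_w-n_{w'}$. Comparing with the two displayed formulas: for this $b$, $q^\Gamma_b=[b_u\geq 0]\,q''_{\widehat c}$ while $q^{\Gamma\setminus u}_c=[c_w\geq 0]\,q''_{\widehat c}$, and the constraint $n_u\geq 0$ is precisely $c_w\geq 0$.

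It then remains to dispose of the indicator $[b_u\geq 0]$. As $n_0$ ranges over lattice vectors with some negative entry, $c$ ranges bijectively over $S_{\Gamma\setminus u}(R(a))$, and the nonzero contributions to $h^{\Gamma\setminus u}_{R(a)}=\sum_{c\in S_{\Gamma\setminus u}(R(a))}q^{\Gamma\setminus u}_c$ come from a finite set of such $c$ (finiteness by the analogue of (\ref{eq:finite}) for $\Gamma\setminus u$), a set that does not depend on $a_u$. For each contributing $n_0$ the forced value $n_u=a_w-e_wn_w-n_{w'}$ and the entry $n_w$ stay in a bounded range independent of $a_u$, so $b_u=a_u-e_un_u-n_w$ equals $a_u$ plus a quantity independent of $a_u$; hence, once $a_u\gg 0$ relative to $R(a)$, we have $b_u\geq 0$ throughout that finite set, i.e.\ $[b_u\geq 0]\equiv 1$ there. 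Therefore $\sum_{b\in T}q^\Gamma_b=\sum_{c\in S_{\Gamma\setminus u}(R(a))}[c_w\geq 0]\,q''_{\widehat c}=\sum_{c\in S_{\Gamma\setminus u}(R(a))}q^{\Gamma\setminus u}_c=h^{\Gamma\setminus u}_{R(a)}$, which combined with the first part yields $h^\Gamma_a=h^u_a+h^{\Gamma\setminus u}_{R(a)}$. I expect the only genuine obstacle to be this last step --- checking that enlarging $a_u$ neither introduces nor removes terms in the relevant window, so that $[b_u\geq 0]$ may be dropped --- which I reduce to the finiteness and $a_u$--independence of the support of $h^{\Gamma\setminus u}_{R(a)}$; the rest is bookkeeping with the intersection form.
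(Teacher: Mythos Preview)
Your proof is correct and follows essentially the same route as the paper's: the same factorization $\cZ_\Gamma=(1-x_u)^{-1}\cZ''$, $\cZ_{\Gamma\setminus u}=(1-x_w)^{-1}\cZ''$, the same splitting of $h^\Gamma_a-h^u_a$ over $n_u\geq 0$, and the same bijection between the relevant $b$'s and $c$'s (the paper writes it as an explicit map $\Phi$ sending $b$ to $R(b)+n_uE_w^{*,\Gamma\setminus u}$, which is exactly your $c=R(a)+\sum_{v\neq u}n_vE_v^{\Gamma\setminus u}$). Your handling of the $a_u\gg 0$ step via finiteness of the support of $h^{\Gamma\setminus u}_{R(a)}$ and its independence of $a_u$ is also the paper's argument, phrased slightly differently (the paper takes the maximum $M$ of $n_w+I_{uu}n_u$ over the finite relevant set and requires $a_u\geq M$).
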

\begin{proof}Since $\delta_w=2$, $\cZ_\Gamma$ and $\cZ_{\Gamma\setminus u}$ have the form
\begin{equation}\label{eq:ad0}
\cZ_\Gamma=\widetilde{\cZ}\cdot \frac{1}{1-x_u}; \ \ \
\cZ_{\Gamma\setminus u}=\widetilde{\cZ}\cdot \frac{1}{1-x_w},
\end{equation}
where $\widetilde{\cZ}$ does not depend on the variables $x_w$ and $x_u$.
Therefore,  for any relevant  $b\in S_\Gamma(a)$ (i.e. when $q^\Gamma_b\not=0$) one has:
$b_w=0$ and $b_u\geq 0$. Hence:
$$h^\Gamma_a-h^u_a=\sum_{b\in S'_\Gamma(a)} q^\Gamma_b$$
where $S'_\Gamma(a)=\{b\in S_\Gamma(a):\, b_w=0,\ b_u\geq 0, \ n_u\geq 0, \exists\, n_v<0\}$.

By similar argument based on (\ref{eq:ad0}), for any
relevant  $c\in S_{\Gamma\setminus u}(R(a))$, one has $c_w\geq 0$,
hence it is enough to consider the subset $S'_{\Gamma\setminus u}(R(a))=\{
c\in S_{\Gamma\setminus u}(R(a)):\, c_w\geq 0\}$ in the computation of
$h^{\Gamma\setminus u}_{R(a)}$.
Applied $(\cdot, E_u)$ to the identity  $b=a+\sum_vn_vE_v$, one gets
\begin{equation}\label{eq:bu}b_u=a_u-n_w-I_{uu}n_u.
\end{equation}
Since $n_u=-(E^{*,\Gamma}_u,b-a)$ in terms of $b$, for any fixed $a$ one has a well--defined map
$$\Phi: S'_\Gamma(a)\to S'_{\Gamma\setminus u}(R(a)),  \ \
b=(\{b_v\}_{v\not=w,u},b_w=0,b_u)\mapsto (\{b_v\}_{v\not=w,u},n_u).$$
Here, $n_u$ maps to the $w$--entry in $ S'_{\Gamma\setminus u}(R(a))$. In other words,
 $\Phi(b)=R(b)+n_uE_w^{*,\Gamma\setminus u}$ (use (\ref{eq:RRR})). Moreover, again by (\ref{eq:RRR}),
 $\Phi(b)-R(a)=R(\sum_{v\not=u}n_vE^\Gamma_v)= \sum_{v\not=u}n_vE^{\Gamma\setminus u}_v$,
 hence the integers $\{n_v\}_{v\not=u}$ are the same at the level of $\Gamma$ and $\Gamma\setminus u$.
This fact, and (\ref{eq:bu}) implies the injectivity of $\Phi$. For the surjectivity,
for any $(\{b_v\}_{v\not=w,u},n_u)\in S'_{\Gamma\setminus u}(R(a))$
set $b_u$ defined by (\ref{eq:bu}),
then $(\{b_v\}_{v\not=w,u},0,b_u)$ satisfies automatically all the
conditions of $S'_\Gamma(a)$, except maybe one, namely
$b_u\geq 0$. In order to guarantee this one too, we argue as follows:
for $R(a)$ fixed, consider all the
elements  $(\{b_v\}_{v\not=w,u},n_u)\in S'_{\Gamma\setminus u}(R(a))$,
and associated with them
the maximum $M$ of all the possible values $n_w+I_{uu}n_u$. Then, if we
 take $a_u\geq M$, then by
(\ref{eq:bu}) the inequality $b_u\geq 0$ is also satisfied.

Finally, notice that from (\ref{eq:ad0}), for any $b\in S'_\Gamma(a)$  one has
$$q^\Gamma_b=q^{\Gamma\setminus u}_{\Phi(b)},$$
since both of them agree with the
$\prod_{v\not=w,u}x^{b_v}$--coefficient of $\widetilde{\cZ}$.
\end{proof}

\begin{corollary}\labelpar{cor:ha} Fix an end--vertex $u$ as above.
For $l'=\sum_va_vE^{*,\Gamma}_v$, with all $a_v$ large (and $a_u$ large compared with the others),
  one has:
$$h^u_a=-\frs^\Gamma_{[-l']}-\frac{K_\Gamma^2+|\cV|}{8}
+\frs^{\Gamma\setminus u}_{[-R(l')]}+\frac{K_{\Gamma\setminus u}^2+|\cV\setminus u|}{8}
-\frac{(l',E^*_u)\cdot (l'+K_\Gamma,E^*_u)}{2(E^*_u,E^*_u)}.
$$
\end{corollary}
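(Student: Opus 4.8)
The statement follows by combining the additivity formula of Proposition~\ref{prop:ad} with the definition of $\frs$ from Theorem~\ref{th:1} and a direct evaluation of the quadratic terms. The plan is to start from $h^\Gamma_a = h^u_a + h^{\Gamma\setminus u}_{R(a)}$, valid for $a_u \gg 0$ by Proposition~\ref{prop:ad} (the hypothesis that $w$ has valency two is exactly what lets us apply it, and for the graphs at hand we may assume such an end–vertex is attached to a valency-two vertex, or reduce to this case). Rearranging gives
\begin{equation*}
h^u_a = h^\Gamma_a - h^{\Gamma\setminus u}_{R(a)}.
\end{equation*}
Now invoke Theorem~\ref{th:1}: for all coordinates large enough (in particular $a_v \geq -e_v-1$, which is implied by $a_v$ large), one has
\begin{equation*}
h^\Gamma_a = \sum_{l\in L,\ l\ngeq 0} p_{l'+l} = -\frs^\Gamma_{[-l']} - \frac{(K_\Gamma + 2l')^2 + |\cV|}{8},
\end{equation*}
and the analogous identity holds for $\Gamma\setminus u$ with $l'$ replaced by $R(l')$, provided the coordinates of $R(l')$ are also large — which holds once the $a_v$ for $v\neq u$ are large, since $R$ simply drops the $E^*_u$–coordinate and keeps the others (by (\ref{eq:RRR})).

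The remaining work is purely a computation with the quadratic form, which I would not grind through in detail. Substituting the two instances of Theorem~\ref{th:1} into $h^u_a = h^\Gamma_a - h^{\Gamma\setminus u}_{R(a)}$ yields
\begin{equation*}
h^u_a = -\frs^\Gamma_{[-l']} - \frac{(K_\Gamma+2l')^2+|\cV|}{8} + \frs^{\Gamma\setminus u}_{[-R(l')]} + \frac{(K_{\Gamma\setminus u}+2R(l'))^2 + |\cV\setminus u|}{8}.
\end{equation*}
So it remains to check the identity of quadratic functions
\begin{equation*}
\frac{(K_{\Gamma\setminus u}+2R(l'))^2 + |\cV\setminus u|}{8} - \frac{(K_\Gamma+2l')^2+|\cV|}{8} = -\frac18 - \frac{(l',E^*_u)\,(l'+K_\Gamma,E^*_u)}{2(E^*_u,E^*_u)},
\end{equation*}
i.e. that $\q(K_\Gamma+2l') - \q_{\Gamma\setminus u}(K_{\Gamma\setminus u}+2R(l')) = \tfrac18 + \tfrac{(l',E^*_u)(l'+K_\Gamma,E^*_u)}{2(E^*_u,E^*_u)}$, where $\q$ is the weight function from (\ref{eq:q}). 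This is a finite-dimensional linear-algebra fact relating the intersection form on $L'(\Gamma)$ to that on $L'(\Gamma\setminus u)$: write $l' = \sum_v a_v E^{*,\Gamma}_v$, use $R(E^{*,\Gamma}_v) = E^{*,\Gamma\setminus u}_v$ for $v\neq u$ and $R(E^{*,\Gamma}_u)=0$, and expand both sides. The adjunction formula $K_\Gamma = \sum_v (e_v+2)E^{*,\Gamma}_v$ identifies $(l'+K_\Gamma, E^*_u)$ explicitly. The cross terms and the $|\cV|$ versus $|\cV\setminus u|$ discrepancy conspire to produce the stated expression; the coefficient $(E^*_u,E^*_u)$ in the denominator is what records the difference between the two bilinear forms along the $u$–direction.

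The main obstacle is organizing this quadratic-form computation cleanly — in particular keeping track of how the projection $R$ interacts with the intersection forms on the two lattices, and verifying that all the "for $a_v$ large" hypotheses needed to apply both Proposition~\ref{prop:ad} and Theorem~\ref{th:1} (to $\Gamma$ and to $\Gamma\setminus u$ simultaneously) are compatible. The former is routine once one fixes the decomposition $L'(\Gamma) = L'(\Gamma\setminus u) \oplus \Z E^{*,\Gamma}_u$ at the level of coordinates; the latter is automatic since "all $a_v$ large, $a_u$ large compared with the others" is a cofinal condition that implies each of the separate largeness requirements.
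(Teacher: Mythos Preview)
Your approach is exactly the paper's: apply Proposition~\ref{prop:ad} to write $h^u_a = h^\Gamma_a - h^{\Gamma\setminus u}_{R(a)}$, then plug in Theorem~\ref{th:1} for both $\Gamma$ and $\Gamma\setminus u$ to obtain
\[
h^u_a = -\frs^\Gamma_{[-l']} - \frac{(K_\Gamma+2l')^2+|\cV|}{8}
+\frs^{\Gamma\setminus u}_{[-R(l')]} + \frac{(K_{\Gamma\setminus u}+2R(l'))^2+|\cV\setminus u|}{8},
\]
and finish by a quadratic-form computation.

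However, the quadratic identity you write down is wrong: the right-hand side is not $-\tfrac18 - \tfrac{(l',E^*_u)(l'+K_\Gamma,E^*_u)}{2(E^*_u,E^*_u)}$ but rather
\[
\frac{K_{\Gamma\setminus u}^2 - K_\Gamma^2}{8} - \frac{1}{8} - \frac{(l',E^*_u)(l'+K_\Gamma,E^*_u)}{2(E^*_u,E^*_u)},
\]
i.e.\ you dropped the $K_\Gamma^2$ and $K_{\Gamma\setminus u}^2$ terms that appear in the statement of the Corollary. The cleanest way to carry out this step, and the one the paper uses, is not to expand in $E^*_v$--coordinates but to use the orthogonal decomposition
\[
l'' = iR(l'') + \frac{(l'',E^*_u)}{(E^*_u,E^*_u)}\, E^*_u, \qquad (i(l),E^*_u)=0 \ \text{for } l\in L'(\Gamma\setminus u),
\]
together with $K_{\Gamma\setminus u} = R(K_\Gamma)$. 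Applying this to $l'' = K_\Gamma + 2l'$ (and separately to $l'' = K_\Gamma$) gives $(K_\Gamma+2l')^2 = (K_{\Gamma\setminus u}+2R(l'))^2 + (K_\Gamma+2l',E^*_u)^2/(E^*_u,E^*_u)$ immediately, and the stated formula drops out. Your coordinate expansion would eventually get there too, but the orthogonality makes the bookkeeping trivial.
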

\begin{proof} Theorem~(\ref{th:1}) applied for $l'$  and $R(l')$, and  Proposition~\ref{prop:ad}
provide
$$h^u_a=-\frs^\Gamma_{[-l']}-\frac{(K_\Gamma+2l')^2+|\cV|}{8}
+\frs^{\Gamma\setminus u}_{[-R(l')]}+\frac{(K_{\Gamma\setminus u}+2R(l'))^2+|\cV\setminus u|}{8}.
$$
Then use the  identity
$K_{\Gamma\setminus u}=R(K_\Gamma)$, and
$$l'' =iR(l'')+\frac{(l'',E^*_v)}{(E^*_u,E^*_u)}\cdot E^*_u$$
for both $l''=l'$ and $l''=K_\Gamma$, and $(i(l),E^*_u)=0$ for any $l\in L'(\Gamma\setminus u)$.
\end{proof}

\bekezd\labelpar{ss:pc}{\bf The series  $\cH_{[l'],u}(t)$ and its periodic constant.} \
For any series $S(\bt)\in \setZ[[L']]$, $S(\bt)=\sum_{l'}c_{l'}\bt^{l'}$, we have the
natural decomposition $$S=\sum_{h\in L'/L}S_h, \ \ \mbox{where} \ \  S_h:=\sum _{l'\,:\ [l']=h}\,
 c_{l'}\bt^{l'}.$$
In particular, for any fixed class $[l']\in L'/L$, one can consider the component
$Z_{[l']}(\bt)$ of $Z(\bt)$. In fact, see e.g. \cite[(3.1.20)]{CDGb},
\begin{equation}\label{eq:Po}
Z_{[l']}(\bt)=
\frac{1}{d}\sum _{\rho\in (L'/L)\,\widehat{}} \rho([l'])^{-1}\cdot
\prod_{v\in \cV} (1-\rho([E^*_v])\bt^{E^*_v})^{\delta_v-2},
\end{equation}
where $(L'/L)\,\widehat{}$ \ is the Pontjagin dual of $L'/L$, and $d=\det(-I)=|L'/L|$.
\begin{definition}\labelpar{def:h}
For any class $[l']\in L'/L$ and vertex $u\in\cV$ of $\Gamma$ set
$$\cH_{[l'],u}(t):=Z_{[l']}(\bt)\big|_{t_u=t^d \ \ \ \ \ \ \ \atop \ t_v=1\ \mbox{\tiny{for}} \ v\not=u
}\ \in \setZ[[t]].$$
\end{definition}

\begin{definition} {\bf Periodic constant \cite[3.9]{NO1}, \cite[4.8(1)]{Opg}.}
  \label{PC}
  Let  $S(t) = \sum_{i\geq 0} c_i t^i$  be a formal power series.
  Suppose that for some positive integer $p$,
  the expression $\sum_{i=0}^{pn-1} c_i$ is a polynomial $P_p(n)$ in the
  variable $n$.  Then the constant term of $P_p(n)$ is independent of $p$.
  We call this constant term the \emph{periodic constant} of $S$ and
  denote it by $\mathrm{pc}(S)$.
\end{definition}

\bekezd\label{be:EEE}{\bf The surgery formula for $\frs$.} \
The relation between the coefficients $h^u_a$ defined in (\ref{ss:first}) and the
series $\cH_{[l'],u}(t)$ is realized as follows
(below, $\{r\}$ will denote the fractional part of the rational number $r$):
\begin{theorem}\labelpar{prop:hpc}
Consider the graphFix  $l'=\sum_va_vE^*_v=\sum_vl'_vE_v$  $\Gamma$ and let $u$ be one of its end--vertices.
with $a$ as in (\ref{cor:ha}).
Abridge $l'_u$ by $\ell$. Then

\vspace{2mm}

(a) \ If \ $\cH_{[l'],u}(t)= \sum_{i\geq 0} c_i t^i$  \ then \  $h^u_a=\sum_{i<d\ell}\, c_i$.

\vspace{2mm}

(b) \ Take  $\bar{l}'\in L'$ such that   $(\bar{l}',E^*_u)\in (-1,0]$.
Then
$$\mathrm{pc}(\cH_{[\bar{l}'],u})=
-\frs^\Gamma_{[-\bar{l}']}-\frac{(K_\Gamma+2\bar{l}')^2+|\cV|}{8}
+\frs^{\Gamma\setminus u}_{[-R(\bar{l}')]}+\frac{(K_{\Gamma\setminus u}+2R(\bar{l}'))^2+|\cV\setminus u|}{8}.$$
\end{theorem}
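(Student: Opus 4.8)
The two parts correspond to the two ingredients of a surgery argument, and I would attack them in turn. For part (a), the key is to understand what the substitution $t_u = t^d$, $t_v = 1$ ($v \neq u$) does to $Z_{[l']}(\bt)$ at the level of coefficients. Writing $Z_{[l']}(\bt) = \sum_{l'': [l'']=[l']} p_{l''}\bt^{l''}$, the coefficient $c_i$ of $t^i$ in $\cH_{[l'],u}(t)$ is the sum of all $p_{l''}$ over those $l''$ in the class $[l']$ whose $u$-coordinate (in the $E_v$-basis, i.e. $(l'')_u$ where $l'' = \sum (l'')_v E_v$) satisfies $d\cdot(l'')_u = i$; here one uses that on the class $[l']$ the $u$-coordinate lies in $(l')_u + \tfrac1d\Z$, so $d\cdot(l'')_u$ is an integer and the index $i$ runs over a residue class mod $d$. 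Therefore $\sum_{i < d\ell} c_i = \sum_{l'': [l'']=[l'],\ (l'')_u < \ell} p_{l''}$. On the other hand, by the definition of $h^u_a$ in (\ref{ss:first}) and the translation between the two sets of coordinates (with $q^\Gamma_b = p_{l''}$ for the corresponding $l''$), $h^u_a$ is the sum of $p_{l''}$ over all $l''$ obtainable from $l'$ by adding an integral combination $\sum n_v E_v$ with $n_u < 0$; since $l'' = l' + \sum n_v E_v$ forces $[l'']=[l']$ and $(l'')_u = \ell + n_u < \ell$, and conversely any such $l''$ in the class with smaller $u$-coordinate is of this form (the remaining $n_v$, $v\neq u$, being arbitrary integers, which is exactly the content of $S_\Gamma(a)$ restricted by $n_u<0$ — note that with $a_u \gg 0$ the sum over $b\in S_\Gamma(a)$ with $n_u<0$ is genuinely the same as summing $p_{l''}$ over $(l'')_u<\ell$ because the support of $Z$ in $\cS'$ is bounded below). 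Matching the two descriptions gives $h^u_a = \sum_{i<d\ell} c_i$.

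For part (b), I would combine (a) with the Corollary (\ref{cor:ha}). Fix $\bar l'$ with $(\bar l', E^*_u)\in(-1,0]$, and for a large integer parameter choose $l' := \bar l' + N\cdot(\text{something making all }a_v\text{ large, }a_u\text{ largest})$ in the same class $[\bar l']$ — concretely one translates $\bar l'$ by a large element of $L$ so that the hypotheses of (\ref{cor:ha}) hold while $[l'] = [\bar l']$. Then $\cH_{[l'],u} = \cH_{[\bar l'],u}$ as series (the class is all that enters Definition~\ref{def:h}), so by part (a), $h^u_a = \sum_{i < d\ell} c_i$ where now $\ell = l'_u$, which is $\bar l'_u$ shifted by an integer (the $E_u$-coefficient of the $L$-translation). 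The point of the normalization $(\bar l', E^*_u)\in(-1,0]$ is that $\bar l'_u = (\bar l', E^*_u)/(E^*_u,E^*_u)$ lies in a half-open interval of length $1/(E^*_u,E^*_u)$, so $d\bar l'_u$ together with $d\ell$ pins down exactly which partial sums of $\cH_{[\bar l'],u}$ we are taking, and as $N\to\infty$ the numbers $d\ell$ run through $pn-1$ for a suitable period $p$ (namely the period governing the quasi-polynomial behaviour of partial sums of $\cH_{[\bar l'],u}$). By the Corollary, $h^u_a$ is equal to an explicit quadratic polynomial in $l'$ — hence, after re-expressing in terms of $\bar l'$ and the translation parameter, a polynomial in $N$ (equivalently in $n$). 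By Definition~\ref{PC}, the periodic constant $\mathrm{pc}(\cH_{[\bar l'],u})$ is precisely the constant term of this polynomial, i.e. the value one obtains by formally setting the growing part to zero. Reading off the constant term of the right-hand side of (\ref{cor:ha}) — which is $-\frs^\Gamma_{[-\bar l']} - \tfrac{(K_\Gamma+2\bar l')^2+|\cV|}{8} + \frs^{\Gamma\setminus u}_{[-R(\bar l')]} + \tfrac{(K_{\Gamma\setminus u}+2R(\bar l'))^2+|\cV\setminus u|}{8}$, once the last, genuinely-quadratic-in-$a_u$ term $-\tfrac{(l',E^*_u)(l'+K_\Gamma,E^*_u)}{2(E^*_u,E^*_u)}$ is recognized as the part that grows and therefore does not contribute to the constant term — gives the claimed formula.

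\textbf{Main obstacle.} The bookkeeping obstacle, and the step I expect to require the most care, is the precise matching in part (b) between the integer $\ell = l'_u$ (and hence the cutoff $d\ell$ in the partial sum), the translation parameter, and the variable $n$ in Definition~\ref{PC}: one must check that as one translates $\bar l'$ by larger and larger multiples of a fixed element of $L$ (with the right $E_u$-coefficient), the cutoffs $d\ell - 1$ are exactly of the form $pn - 1$ for the relevant period $p$, so that $\sum_{i<d\ell} c_i$ is literally the quasi-polynomial $P_p(n)$ from the definition of the periodic constant, and that the quadratic function from (\ref{cor:ha}), rewritten in $n$, is genuinely a polynomial (not merely quasi-polynomial) in $n$ so that its constant term is well-defined and equals $\mathrm{pc}$. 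The only other delicate point is justifying, in part (a), that replacing "sum over $b \in S_\Gamma(a)$ with $n_u < 0$" by "sum over $l''$ in the class with $(l'')_u < \ell$" loses nothing — this is where one invokes that $Z$ is supported on $\cS'$ together with the finiteness statement (\ref{eq:finite}), exactly as the parenthetical remark after (\ref{th:1})(a) does; the condition $a_u \gg 0$ from (\ref{cor:ha}) is what makes these two index sets agree on the support of $Z$.
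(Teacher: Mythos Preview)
Your approach is essentially the paper's: for (a) you unfold the substitution and match indices, and for (b) you combine (a) with Corollary~\ref{cor:ha}, identify the partial sums $\sum_{i<d\ell}c_i$ as a polynomial in $n=\lfloor\ell\rfloor$, and read off the periodic constant as the value at $n=0$. Your ``main obstacle'' is exactly what the paper pins down: since nonzero $c_i$ satisfy $i-d\{\ell\}\in d\setZ$, the condition $i<d\ell$ becomes $i\le dn-1$, and the normalization $(\bar l',E^*_u)\in(-1,0]$ forces $\{\ell\}=-(\bar l',E^*_u)$ (note: $\bar l'_u=-(\bar l',E^*_u)$, not $(\bar l',E^*_u)/(E^*_u,E^*_u)$ as you wrote).

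There are two genuine gaps. First, when you translate $\bar l'$ by an element $m\in L(\Gamma)$ you keep $[l']=[\bar l']$, but you do \emph{not} automatically keep $[R(l')]=[R(\bar l')]$ in $L'(\Gamma\setminus u)/L(\Gamma\setminus u)$, because $R(E_u)=-E^{*,\Gamma\setminus u}_w$ is not in general integral; without this the term $\frs^{\Gamma\setminus u}_{[-R(l')]}$ from Corollary~\ref{cor:ha} need not equal $\frs^{\Gamma\setminus u}_{[-R(\bar l')]}$. The paper imposes both class conditions on $l'$ explicitly.

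Second, and more substantial: Corollary~\ref{cor:ha} rests on Proposition~\ref{prop:ad}, which carries the hypothesis $\delta_w=2$ for the vertex $w$ adjacent to $u$. Your argument (like the paper's direct argument) therefore proves (b) only under this restriction. The paper closes the gap by blowing up the edge adjacent to $u$: the new vertex in $\Gamma^b$ has valency two, one runs the argument for $\Gamma^b$ and $\Gamma^b\setminus u$, and then invokes that every term in the formula of (b) --- the $\frs$-invariants (Theorem~\ref{th:1}(b)), the quantity $(K+2l')^2+|\cV|$, and the series $\cH_{[l'],u}$ --- is stable under blow-up/down. You should add this step.
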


\begin{proof}
Write $Z_{[l']}(\bt)=\sum c_{l''}\bt^{l''}$. Then (a) follows from
$\cH_{[l'],u}(t)=\sum  c_{l''}t^{dl''_u}$ and
$$h^u_a=\sum _{b\,:\, n_u<0}q^\Gamma_b=
\sum _{l''\,:\,l''_u<l'_u}c_{l''}.$$
(b) For any fixed $\bar{l}'$ as in the assumption of (b),
take $l'$ such that $[l']=[\bar{l}']$, and $[R(l')]=[R(\bar{l}')]$
and $l'=\sum_va_vE^*_v=\sum_vl'_vE_v$ with $a\gg0$. Since $\bar{l}'-l'\in L$, we get that
$l'_u+(\bar{l}',E^*_u)=(\bar{l}'-l',E^*_u)\in\Z$, hence $-(\bar{l}',E^*_u)$ is the fractional part of $l'_u$.
Write $l'_u=\ell$ and let $n$ be its integral part.
For this $l'$ we apply part (a) and the identity of (\ref{cor:ha}).

In order to compute the periodic constant of $\cH_{[l'],u}(t)$,
notice that if $c_{l''}\not=0$ then $l''-l'\in L$, hence
$l''_u-l'_u=(E^*_u,l'-l'')\in \setZ$. Therefore, if the coefficient $c_i$ of $\cH_{[l'],u}$ is nonzero, then
$i-d\{\ell\}\in d\setZ$. In particular, for $c_i\not=0$, $i<d\ell$ if and only if $i\leq dn-1$. This shows that
one has to write  $h^u_a$ as $P(n) $ for some polynomial $P$, and then, the periodic constant of
$\cH_{[l'],u}$ is $P(0)$. From  (\ref{cor:ha}), we get that $P(n)$ is the right hand side of that identity
after the substitution $(l',E^*_u)=-l'_u=-\{\ell\}-n$. Therefore:
$$\mathrm{pc}(\cH_{[l'],u})=
-\frs^\Gamma_{[-l']}-\frac{K_\Gamma^2+|\cV|}{8}
+\frs^{\Gamma\setminus u}_{[-R(l')]}+\frac{K_{\Gamma\setminus u}^2+|\cV\setminus u|}{8}
-\frac{\{\ell\}\cdot (\{\ell\}-(K_\Gamma,E^*_u))}{2(E^*_u,E^*_u)}.$$
Now, this identity  provides  (b) by a straightforward computation (similar to
the computation from the proof of (\ref{cor:ha})).

Strictly speaking, the above argument proves the surgery formula (b)
only if $\delta_w=2$ (cf. the assumption of (\ref{prop:ad})). In general we argue as follows:
let $u$ be an end--vertex, and let us blow up the unique edge adjacent to $u$ getting in this way $\Gamma^b$.
Then the newly created vertex $w$ has $\delta_w=2$. Hence in this situation we can apply the above proof
for $\Gamma^b$ and $\Gamma^b\setminus u$. Since blowing down the $w$--vertex in $\Gamma^b\setminus u$ we get
$\Gamma\setminus u$, and all the involved invariants in (b) are stable with respect to blow up/down,
the result follows.  Indeed,
$$(K+2l')^2+|\Gamma|=(K^b+2\pi^*(l'))^2+|\Gamma^b|,$$
the $\frs$-terms are stable by  (\ref{th:1})(b), and $\cH_{[l'],u}$
by the fact that it depends only on the numbers $(E^*_{v_1},E^*_{v_2})$ where $\delta_{v_i}\not=2$.
\end{proof}

\subsection{The identification of $\frs$ with the Seiberg--Witten invariant}\labelpar{s:SW}

\bekezd\labelpar{ss:SW}{\bf Some facts about the Seiberg--Witten invariant of $M$.}
Let $\Gamma$ be a connected negative definite plumbing graph, and
let $\widetilde{X}$ be the plumbed 4--manifold constructed from $\Gamma$. If $\Gamma$ is a resolution graph,
e.g. as in  (\ref{ss:11}), then the (diffeomorphism type) of the resolution serves for it.
Let $\widetilde{\sigma}_{can}$ be the {\it canonical $spin^c$--structure} on $\widetilde{X}$; its
first Chern class $c_1( \widetilde{\sigma}_{can})$ is $-K\in L'$, cf. \cite[p.\,415]{GS}.
The set of $spin^c$--structures $\mathrm{Spin}^c(\widetilde{X})$ is an $L'$--torsor; if we denote
the $L'$--action by $l'*\widetilde{\sigma}$, then $c_1(l'*\widetilde{\sigma})=c_1(\widetilde{\sigma})+2l'$.

If $M=M(\Gamma)$ is the plumbed 3--manifold associated with $\Gamma$, then $M=\partial \widetilde{X}$,
and all the $spin^c$--structures of $M$ are obtained by restrictions from $\widetilde{X}$.
$\mathrm{Spin}^c(M)$ is an $L'/L$--torsor, compatible with the restriction and the projection $L'\to L'/L$.
The {\it canonical $spin^c$--structure} $\sigma_{can}$ of $M$ is the restriction of $\widetilde{\sigma}_{can}$.

We denote the Seiberg--Witten invariant by $\frsw:\mathrm{Spin}^c(M)\to\setQ$, $\sigma\mapsto \frsw_\sigma$.
Next we recall a  surgery formula satisfied by them, proved in \cite{BN}.

Let us fix one of the end--vertices $u$ of $\Gamma$ (though the statement is true for any vertex, cf.
[loc.cit]). Let $\widetilde{X}(\Gamma\setminus u)$ be the
tubular neighbourhood of $\cup_{v\not=u}E_v$ in $\widetilde{X}$, and $M_u$ its boundary. Hence,
$M_u$ is the plumbed 3--manifold associated with $\Gamma\setminus u$.

Fix any $\sigma\in \mathrm{Spin}^c(M)$, extend it to a $spin^c$--structure $\widetilde{\sigma}\in \mathrm{Spin}^c(\widetilde{X})$ of the form $\widetilde{\sigma}=\widetilde{l}'*\widetilde{\sigma}_{can}$ with
$\widetilde{l}'\in L'$, $(\widetilde{l}',E^*_u)\in [0,1)$. Then consider $R(\widetilde{l}')\in L'(\Gamma\setminus u)$
and the restriction $\widetilde{\sigma}_u$ and $\sigma_u$ of $\widetilde{\sigma}$ to
$\widetilde{X}(\Gamma\setminus u)$ and
$\partial \widetilde{X}(\Gamma\setminus u)$ respectively. Then the main result of
\cite{BN} says the following identity.
(Here we wish to draw the reader's attention to the notational differences between the present note and
\cite{BN}: in that article the `dual' $E^*_v$ has opposite sign, this creates a sign difference
in $(\widetilde{l}',E^*_u)\in [0,1)$, and also in the expression of $\cH$
from (\ref{eq:Po})  the characters $\rho\in
(L'/L)\,\widehat{}$ \, should be replaced by their inverses. Hence, $\cH_{\sigma,u}$ of \cite{BN}
is our $\cH_{[-\widetilde{l}'],u}$.)

\begin{theorem}\labelpar{th:BN}\cite{BN}
$$\frsw_\sigma(M)+\frac{c_1(\widetilde{\sigma})^2+|\cV|}{8}=-\mathrm{pc}(\cH_{[-\widetilde{l}'],u})+
\frsw_{\sigma_u}(M_u)+\frac{c_1(\widetilde{\sigma_u})^2+|\cV\setminus u|}{8}.$$
\end{theorem}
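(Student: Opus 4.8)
The plan is to prove the surgery formula \emph{directly}, not via the periodic--constant identity of Theorem~\ref{prop:hpc} (combining the two is precisely what later yields $\frsw=\frs$, so using it here would be circular), but by reducing the Seiberg--Witten invariant to its two combinatorial constituents and analysing each under the deletion of the end--vertex $u$. By Nicolaescu's theorem \cite{Nico5} (itself resting on the surgery formulae of \cite{MW}), for a rational homology sphere the normalized invariant $\frsw_\sigma(M)+\frac{c_1(\widetilde{\sigma})^2+|\cV|}{8}$ equals the sum of the sign--refined Reidemeister--Turaev torsion $\mathbb{T}_{M,\sigma}$ and a normalized multiple of the Casson--Walker invariant $\lambda(M)$; the $c_1^2$--terms on both sides of the claim are exactly the correction making this a diffeomorphism invariant independent of $\widetilde{X}$. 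Topologically, passing from $M_u=M(\Gamma\setminus u)$ to $M$ is the $2$--handle attachment along the knot $K\subset M_u$ given by the $S^1$--fibre over the vertex $w$ adjacent to $u$, with framing $e_u$, so that $M$ is the corresponding surgery on $K$. Thus the statement will follow once the surgery behaviour of $\mathbb{T}$ and of $\lambda$ is matched against $-\mathrm{pc}(\cH_{[-\widetilde{l}'],u})$ together with the quadratic corrections.

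First I would dispose of the Casson--Walker piece. Walker's surgery formula, in the combinatorial shape extractable from Lescop's book \cite{Lescop}, expresses $\lambda(M)-\lambda(M_u)$ as an explicit function of the framing $e_u$, the self--linking $(E^*_u,E^*_u)$, and the Alexander polynomial of $K$ in $M_u$. After an optional blow--up of the edge at $u$ (making $\delta_w=2$, exactly the reduction used in the final paragraph of the proof of Theorem~\ref{prop:hpc}, and harmless since $\lambda$, $\mathbb{T}$, $\frsw$ and $\mathrm{pc}$ are all blow--up stable), this defect collapses to a single rational quadratic expression in $(\widetilde{l}',E^*_u)$ plus a Dedekind--sum term.

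Second --- and here lies the real content --- I would analyse the torsion. By the combinatorial formula of \cite{NN1}, $\mathbb{T}_{M,\sigma}$ is a Dedekind--Fourier sum over the characters $\rho\in(L'/L)\,\widehat{}$ of the \emph{same} shape as the decomposition (\ref{eq:Po}) of $Z_{[-\widetilde{l}']}$. Specializing all variables but $t_u$ to $1$ collapses that decomposition to
\[
\cH_{[-\widetilde{l}'],u}(t)=\frac{1}{d}\sum_{\rho\in (L'/L)\,\widehat{}}\ \rho([-\widetilde{l}'])^{-1}\,\prod_{v\in\cV}\bigl(1-\rho([E^*_v])\,t^{\,d\,m_v}\bigr)^{\delta_v-2},
\]
where $m_v>0$ is the $E_u$--coordinate of $E^*_v$ (so $d\,m_v\in\Z$). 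I would then read $\mathrm{pc}$ off the Laurent structure of this rational function at $t=1$: the trivial character and the characters with $\rho([E^*_v])=1$ at the end--vertices produce the poles, hence the quasi--polynomial growth of the partial sums, and the constant term of that quasi--polynomial is $\mathrm{pc}(\cH_{[-\widetilde{l}'],u})$. The assertion I would establish is that $-\mathrm{pc}$ splits along this character decomposition so that its nontrivial--character part reproduces the torsion difference $\mathbb{T}_{M,\sigma}-\mathbb{T}_{M_u,\sigma_u}$, while the trivial--character (pole) part reassembles the Casson--Walker defect of the previous step together with the quadratic $c_1^2$--corrections.

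The hard part will be this last matching: bridging the analytic periodic constant with the arithmetic Dedekind--sum expressions of $\mathbb{T}$ and $\lambda$. The delicate points are (i) the degenerate characters with $\rho([E^*_u])=1$, where the end--vertex factors develop poles at $t=1$ and the naive geometric series must be replaced by their polynomial parts --- precisely the quantity controlling $\mathrm{pc}$; (ii) tracking how the restriction $R$ and the constraint $(\widetilde{l}',E^*_u)\in[0,1)$ select the correct lift of $\sigma_u$ on $M_u$, so that $\mathbb{T}_{M_u,\sigma_u}$ and not another Fourier component appears; and (iii) reconciling the sign and inversion conventions discussed just before the statement (the opposite sign of $E^*_v$ and the replacement $\rho\mapsto\rho^{-1}$ in (\ref{eq:Po})), so that the quadratic terms and the periodic constant finally assemble into the stated identity.
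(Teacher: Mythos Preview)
This theorem is not proved in the present paper at all: it is stated with the citation \cite{BN} and introduced as ``the main result of \cite{BN}''. The paper simply \emph{invokes} it as an external input and then combines it with its own Theorem~\ref{prop:hpc}(b) to run the induction in Theorem~\ref{th:MR}. So there is no ``paper's own proof'' to compare your attempt against.

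What you have written is a reasonable outline of how one would expect the proof in \cite{BN} to go: decompose $\frsw$ via Nicolaescu \cite{Nico5} into the Reidemeister--Turaev torsion plus the Casson--Walker term, control the latter by Lescop/Walker surgery, and match the torsion difference against the character decomposition (\ref{eq:Po}) of $\cH_{[-\widetilde{l}'],u}$. You correctly flag the genuinely delicate points (degenerate characters with $\rho([E^*_u])=1$, the choice of lift via $(\widetilde{l}',E^*_u)\in[0,1)$, and the sign conventions). That said, your proposal is a strategy, not a proof: the ``hard part'' you yourself isolate --- the actual identification of $-\mathrm{pc}(\cH_{[-\widetilde{l}'],u})$ with the sum of the torsion defect and the quadratic Casson--Walker correction --- is named but not carried out, and that identification is essentially the entire content of \cite{BN}. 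One small inaccuracy: the $c_1(\widetilde{\sigma})^2$ term is not ``the correction making this a diffeomorphism invariant independent of $\widetilde{X}$''; it depends on the chosen extension $\widetilde{\sigma}$, and the point of the normalization $(\widetilde{l}',E^*_u)\in[0,1)$ is precisely to pin down which extension is meant so that both sides are well defined.
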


\bekezd\labelpar{ss:MRa}{\bf The proof of Theorem A} is completed by the next result.
\begin{theorem}\labelpar{th:MR}
Let $\frs:L'/L\to\setQ$ be the invariant defined from the series $Z(\bt)$ in (\ref{eq:s}).
Then for any $[l']\in L'/L$ one has:
$$\frs_{[l']}=\frsw_{[l']*\sigma_{can}}.$$
\end{theorem}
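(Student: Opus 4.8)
The plan is to prove $\frs_{[l']}=\frsw_{[l']*\sigma_{can}}$ by induction on the number of vertices $|\cV|=s$. First I would dispose of the base case $s=1$: here $\Gamma$ is a single vertex with weight $e<0$, $M$ is the lens space $L(|e|,1)$ (or $L(|e|,e)$ depending on orientation conventions), and both sides can be computed explicitly — the left side from the formula for $\cU_\Gamma$ obtained in the proof of Theorem~\ref{th:ZW} (giving $\cZ_\Gamma(x)=(1-x)^{-1}$, so $Z(\bt)=\sum_{i\ge 0}\bt^{iE^*}$ and the periodic-constant/Dedekind-sum computation of $\frs$ is elementary), and the right side from the known Seiberg--Witten invariants of lens spaces (via Casson--Walker plus Reidemeister--Turaev torsion, or directly). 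One checks they agree for every class in $L'/L\cong\Z/|e|\Z$.

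For the inductive step, assume $s\ge 2$ and the statement holds for all negative definite graphs with fewer vertices. Fix $[l']\in L'/L$ and an end-vertex $u$ of $\Gamma$. Choose the representative $\widetilde{l}'\in L'$ of the relevant class with $(\widetilde{l}',E^*_u)\in(-1,0]$ so that Theorem~\ref{prop:hpc}(b) applies with $\bar l'=\widetilde{l}'$; note $[R(\widetilde{l}')]$ determines a $spin^c$-structure on $M_u=M(\Gamma\setminus u)$. The heart of the argument is to line up the two surgery formulae. Theorem~\ref{prop:hpc}(b) gives
\begin{equation*}
\mathrm{pc}(\cH_{[\widetilde{l}'],u})=
-\frs^\Gamma_{[-\widetilde{l}']}-\frac{(K_\Gamma+2\widetilde{l}')^2+|\cV|}{8}
+\frs^{\Gamma\setminus u}_{[-R(\widetilde{l}')]}+\frac{(K_{\Gamma\setminus u}+2R(\widetilde{l}'))^2+|\cV\setminus u|}{8},
\end{equation*}
while Theorem~\ref{th:BN}, applied with $\widetilde{\sigma}=(-\widetilde{l}')*\widetilde{\sigma}_{can}$ (so that $c_1(\widetilde\sigma)=-K_\Gamma+2(-\widetilde l')\cdot(-1)$... more precisely with the sign bookkeeping so that $\cH_{\sigma,u}$ of \cite{BN} is our $\cH_{[\widetilde l'],u}$, as flagged in the excerpt), gives
\begin{equation*}
\frsw_\sigma(M)+\frac{c_1(\widetilde{\sigma})^2+|\cV|}{8}=-\mathrm{pc}(\cH_{[\widetilde{l}'],u})+
\frsw_{\sigma_u}(M_u)+\frac{c_1(\widetilde{\sigma_u})^2+|\cV\setminus u|}{8}.
\end{equation*}
Since $c_1(\widetilde\sigma)=K_\Gamma+2\widetilde l'$ up to the sign convention and $c_1(\widetilde{\sigma_u})=K_{\Gamma\setminus u}+2R(\widetilde l')$ (using $K_{\Gamma\setminus u}=R(K_\Gamma)$ and compatibility of restriction with $c_1$), adding the two displayed identities and invoking the inductive hypothesis $\frs^{\Gamma\setminus u}_{[-R(\widetilde l')]}=\frsw_{\sigma_u}(M_u)$ (note $\sigma_u=[-R(\widetilde l')]*\sigma_{can}$ on $M_u$) cancels the $\mathrm{pc}$ terms, the quadratic terms, and the $\Gamma\setminus u$ terms, leaving exactly $\frs^\Gamma_{[-\widetilde l']}=\frsw_\sigma(M)$. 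Finally, translating back, $\sigma=(-\widetilde l')*\sigma_{can}$, so $[-\widetilde l']=[l'']$ with $\sigma=[l'']*\sigma_{can}$; renaming the class gives $\frs_{[l']}=\frsw_{[l']*\sigma_{can}}$ for every class, since $[l']$ was arbitrary.

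The main obstacle I expect is the sign and torsor-action bookkeeping: matching the $spin^c$ conventions of \cite{BN} (where $E^*_v$ carries the opposite sign and the characters $\rho$ must be inverted) with the present note's conventions, making sure that the class fed into Theorem~\ref{th:BN} is precisely the one for which Theorem~\ref{prop:hpc}(b) produces the same periodic constant, and that the restriction $R$ on $L'$ induces the correct map on $L'/L$ compatible with the restriction of $spin^c$-structures. One must also verify that the hypothesis $(\widetilde{l}',E^*_u)\in(-1,0]$ used in \ref{prop:hpc}(b) is compatible with the hypothesis $(\widetilde l',E^*_u)\in[0,1)$ in \ref{th:BN} after accounting for the sign flip — this is exactly the parenthetical remark in the excerpt that $\cH_{\sigma,u}$ of \cite{BN} equals $\cH_{[-\widetilde l'],u}$ here, and it needs to be used carefully. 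A secondary point, already handled in the proof of \ref{prop:hpc}, is that the surgery formula for $\frs$ was established under the extra assumption $\delta_w=2$ on the neighbour $w$ of $u$; one passes to the blow-up $\Gamma^b$ and uses stability of all invariants (of $\frs$ by Theorem~\ref{th:1}(b), of $\frsw$ since $M$ is unchanged, of the quadratic terms as recorded in the excerpt) under blow-up/down to remove it, so no genuine new difficulty arises there.
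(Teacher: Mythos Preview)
Your proposal is correct and follows essentially the same approach as the paper: induction on $|\cV|$, with the inductive step obtained by matching the surgery formula for $\frs$ from Theorem~\ref{prop:hpc}(b) against the Seiberg--Witten surgery formula of Theorem~\ref{th:BN}, after the sign/torsor bookkeeping you flag (the paper phrases this as taking $\widetilde{l}'=-\bar{l}'$ so that $c_1(\widetilde\sigma)=-(K+2\bar{l}')$ and $\sigma=[-\bar{l}']*\sigma_{can}$). The base case and the blow-up reduction are handled exactly as you describe; the paper also notes, as an alternative to the explicit $s=1$ check, that one may cite \cite{coho3} for splice-quotient (hence Seifert) links.
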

\begin{proof}
Notice that $-\bar{l}'$ considered in (\ref{prop:hpc}) satisfies the needed assumptions for
$\widetilde{l}'$ in (\ref{ss:SW}). Moreover, $c_1(\widetilde{\sigma})=-(2\bar{l}'+K)$, and
$c_1(\widetilde{\sigma}_u)=-R(2\bar{l}'+K)$, and $\sigma=[-\bar{l}']*\sigma_{can}$.
Hence, the surgery identities (\ref{th:BN}) and (\ref{prop:hpc})(b) show that
$\frs_{[-l']}$ satisfy the same surgery formula than $\frsw_{[-l']*\sigma_{can}}$.
One can verify that they coincide for graphs with one vertex (or one can apply
\cite[\S 4]{coho3} which shows that they coincide for links of  splice--quotient
singularities, including all Seifert-manifold). Therefore, by induction on the
number of vertices, we get the result.
\end{proof}

\bekezd\labelpar{ss:MRb}{\bf The proof of Theorem B} is a combination of (\ref{eq:INVOL}),
(\ref{eq:SEU}) and  (\ref{th:MR}).

\end{document}